\newcommand{\be}{\begin{equation}}
\newcommand{\ee}{\end{equation}}
\newtheorem{thm}{Theorem}[section]
\newtheorem{cor}[thm]{Corollary}
\newtheorem{prop}[thm]{Proposition}
\newtheorem{lemma}[thm]{Lemma}
\newtheorem{remark}[thm]{Remark}
\newcommand{\bba}{{\mathbb{A}}}
\newcommand{\bbq}{{\mathbb{Q}}}
\newcommand{\bbz}{{\mathbb{Z}}}
\newcommand{\bbc}{\mathbb{C}}
\newcommand{\bbf}{\mathbb{F}}
\newcommand{\Gal}{\operatorname{Gal}}
\newcommand{\Tr}{\operatorname{Tr}}
\newcommand{\ord}{\operatorname{ord}}
\newcommand{\Hom}{\operatorname{Hom}}
\title[Hilbert schemes of points and generalized Kummer varieties]{Hilbert schemes of points on smooth projective surfaces and generalized Kummer varieties with finite group actions}
\author{Sailun Zhan}
\address{Department of Mathematical Sciences, Binghamton University, Binghamton, NY, 13902, U.S.A.}
\email{zhans@binghamton.edu}
\subjclass[2010]{14G17, 14J15, 14J50}
\keywords{Smooth projective surfaces, Group representations, Hilbert schemes of points, generalized Kummer varieties}
\begin{document}

\begin{abstract}
G\"ottsche and Soergel gave formulas for the Hodge numbers of Hilbert schemes of points on a smooth algebraic surface and the Hodge numbers of generalized Kummer varieties. When a smooth projective surface $S$ admits an action by a finite group $G$, we describe the action of $G$ on the Hodge pieces via point counting. Each element of $G$ gives a trace on $\sum_{n=0}^{\infty}\sum_{i=0}^{\infty}(-1)^{i}H^{i}(S^{[n]},\mathbb{C})q^{n}$. In the case that $S$ is a K3 surface or an abelian surface, the resulting generating functions give some interesting modular forms when $G$ acts faithfully and symplectically on $S$. 
\end{abstract}
\maketitle

\section{Introduction}

Let $S$ be a smooth projective surface over $\bbc$. In \cite{GS93}, the Hodge numbers of the Hilbert scheme of points of $S$ are computed via perverse sheaves/mixed Hodge modules:
\[
\sum_{n=0}^{\infty} h(S^{[n]},u,v)t^n=\prod_{m=1}^{\infty} \prod_{p,q}\left({\sum_{i=0}^{h^{pq}}(-1)^{i(p+q+1)}\binom{h^{pq}}{i}u^{i(p+m-1)}v^{i(q+m-1)}t^{mi}} \right)^{(-1)^{p+q+1}},
\]
where $S^{[n]}$ is the Hilbert scheme of $n$ points of $S$, $h(S^{[n]},u,v)=\sum_{p,q}h^{pq}(S^{[n]})u^{p}v^{q}$ is the Hodge-Deligne polynomial, and $h^{pq}$ are the dimensions of the Hodge pieces $H^{p,q}(S,\bbc)$. The Hodge numbers of the higher order Kummer varieties (generalized Kummer varieties) of an abelian surface are also computed:
\[
h(K_n(A),-u,-v)=
\]
\[
\frac{1}{((1-u)(1-v))^2}\sum_{\alpha\in P(n)}gcd(\alpha)^{4}(uv)^{n-|\alpha|}\left(\prod_{i=1}^{\infty}\sum_{\beta^{i}\in P(\alpha_i)}\prod_{j=1}^{\infty}\frac{1}{j^{\beta^{i}_j}\beta^{i}_{j}!}((1-u^j)(1-v^j))^{2\beta^{i}_j}\right),
\]
where $\alpha=(1^{\alpha_1}2^{\alpha_2}...)$ is a partition of $n$, $|\alpha|$ is the number of parts, and $gcd(\alpha):=gcd\{i\in\bbz|\alpha_i\neq 0\}$.

In this paper $G$ will always be a finite group. We will consider a smooth projective K3 surface $S$ over $\mathbb{C}$ with a $G$-action, and ask whether we can prove similar equalities for $G$-representations. We use an equivariant version of the idea in G\"ottsche \cite{Go90}, which studies the cohomology groups by counting the number of rational points over finite fields. Then we lift the results to the Hodge level by p-adic Hodge theory.

We will consider the G-equivariant Hodge-Deligne polynomial for a smooth projective variety $X$
\[
E(X;u,v)=\sum_{p,q}(-1)^{p+q}[H^{p,q}(X,\bbc)]u^{p}v^{q},
\]
where the coefficients lie in the ring of virtual G-representations $R_{\bbc}(G)$, of which the elements are the formal differences of isomorphism classes of finite dimensional $\bbc$-representations of $G$. The addition is given by direct sum and the multiplication is given by tensor product.

\begin{thm}\label{thm1}
Let $S$ be a smooth projective surface over $\bbc$ with a $G$-action. Let $S^{[n]}$ be the Hilbert scheme of n points of $S$. Then we have the following equality as virtual $G$-representations.
\[
\sum_{n=0}^{\infty}E(S^{[n]})t^{n}=\prod_{m=1}^{\infty}\prod_{p,q}\left({\sum_{i=0}^{h_{p,q}}(-1)^{i}[\wedge^{i}H^{p,q}(S,\bbc)]u^{i(p+m-1)}v^{i(q+m-1)}t^{mi}} \right)^{(-1)^{p+q+1}},
\]
where $h_{p,q}$ are the dimensions of the Hodge pieces $H^{p,q}(S,\bbc)$.
\end{thm}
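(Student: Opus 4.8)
The plan is to reduce the theorem to a family of scalar identities indexed by the elements $g \in G$, prove each by equivariant point counting over finite fields, and then transport the result from \'etale to Hodge cohomology. Since an element of $R_{\bbc}(G)$ is determined by its character, it suffices to apply $\Tr(g \mid -)$ to both sides for every $g \in G$. After diagonalizing $g$ on each $H^{p,q}(S,\bbc)$ with eigenvalues $\lambda^{p,q}_1,\dots,\lambda^{p,q}_{h_{p,q}}$, the identity $\sum_i (-1)^i \Tr(g \mid \wedge^i H^{p,q})\,x^i = \prod_k (1-\lambda^{p,q}_k x)$ turns the right-hand side into
\[
\prod_{m=1}^{\infty}\prod_{p,q}\prod_{k=1}^{h_{p,q}}\left(1-\lambda^{p,q}_k\,u^{p+m-1}v^{q+m-1}t^{m}\right)^{(-1)^{p+q+1}},
\]
so that the theorem becomes the assertion that the $g$-trace of $\sum_n E(S^{[n]})t^n$ factors in this way. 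This is exactly the shape of G\"ottsche--Soergel's non-equivariant formula, with each Hodge dimension $h_{p,q}$ refined into the $g$-eigenvalues.

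Next I would realize these traces geometrically. Spreading out the pair $(S, G)$ over a finitely generated $\bbz$-subalgebra $R \subset \bbc$ gives a smooth projective $R$-scheme with $G$-action; the Hilbert scheme $S^{[n]}$ spreads out with it by functoriality, and at almost every closed point we obtain smooth projective $S_{\bbf_q}$ and $S^{[n]}_{\bbf_q}$ with $G$-action. Smooth proper base change identifies $H^i_{et}(S^{[n]}_{\overline{\bbf}_q},\bbq_\ell)$ with $H^i(S^{[n]}(\bbc),\bbq_\ell)$ as $G$-modules. For each $g$, the automorphism $g$ commutes with the geometric Frobenius $F$, so the Grothendieck--Lefschetz trace formula applied to the correspondence $g\circ F$ gives $\sum_i (-1)^i \Tr(gF \mid H^i_{et}(S^{[n]}_{\overline{\bbf}_q})) = \#\{Z : (gF)(Z)=Z\}$, the number of length-$n$ subschemes fixed by the twisted Frobenius.

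The combinatorial core is then an equivariant version of G\"ottsche's count. Using the Hilbert--Chow morphism $S^{[n]}\to S^{(n)}$, I would stratify by the partition type of the support; over each stratum the fiber is a product of punctual Hilbert schemes, whose cohomology is of Tate type with a cell structure indexed by partitions. The twisted Frobenius $gF$ permutes the support points and acts on each local punctual factor through its action on the tangent plane $T_xS$ at a fixed point, so assembling the strata produces a generating function expressed entirely through the eigenvalues of $gF$ on $H^*(S)$ together with the local weights. Matching this infinite product against the displayed factorization identifies $u^{p+m-1}v^{q+m-1}t^m$ with the $(p,q,m)$-contribution and $\lambda^{p,q}_k$ with the corresponding eigenvalue, establishing the identity at the level of $gF$-traces on $\ell$-adic cohomology.

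The main obstacle is the final lift from this $\ell$-adic/Frobenius-trace identity to the genuine Hodge statement, which is where p-adic Hodge theory enters. The count with $gF$ sees only the total weight $p+q$, i.e.\ the power of $uv$; to separate the two variables $u$ and $v$ one must recover the full Hodge bigrading, and this I would extract from the crystalline--de Rham comparison for the good-reduction fibers, reading the Hodge numbers off the slopes of crystalline Frobenius while checking that the $G$-action preserves the Hodge filtration and commutes with crystalline Frobenius. The delicate points are ensuring a Zariski-dense set of primes of good reduction at which the eigenvalue data are in sufficiently general position to invert the system, and confirming that the comparison isomorphisms are $G$-equivariant so that the reconstructed $g$-traces on $H^{p,q}(S^{[n]},\bbc)$ agree with what the count predicts; granting these, the scalar identities hold for all $g$ and the theorem follows.
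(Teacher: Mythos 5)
Your overall architecture --- reduce to characters, spread out, count $gF_q$-fixed points via Grothendieck--Lefschetz, stratify through the Hilbert--Chow morphism, then lift to Hodge theory --- is the same as the paper's. But two steps as you describe them would not go through. First, in the combinatorial core you say the twisted Frobenius ``acts on each local punctual factor through its action on the tangent plane $T_xS$'' and that the assembled generating function is expressed through the eigenvalues of $gF$ on $H^*(S)$ ``together with the local weights.'' For the product to collapse to the stated form, you need the local count $|{\rm Hilb}^n(\widehat{\mathcal{O}_{S,x}})(\overline{\bbf}_q)^{gF_q}|$ to be \emph{independent of $g$}, equal to the plain Frobenius count $|{\rm Hilb}^n(\bbf_q[[s,t]])(\overline{\bbf}_q)^{F_q}|$; otherwise the local factors carry genuine dependence on the tangent representation and the answer would not be a function of the eigenvalues of $g$ on $H^*(S)$ alone. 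This is the paper's key Lemma 3.2: after linearizing $g$ at the fixed point (which requires $p\nmid |G|$ for the averaging argument), the local action factors through the connected group $GL_2$, and by Deligne--Lusztig a connected group acts trivially on compactly supported cohomology, so the $\tilde{g}$-twist disappears from the Lefschetz count. You have not supplied this idea, and without it the ``assembling'' step does not close.

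Second, your proposed mechanism for recovering the Hodge bigrading --- ``reading the Hodge numbers off the slopes of crystalline Frobenius'' --- is not correct as stated: the slopes give the Newton polygon, which only bounds the Hodge polygon from above and determines it only under an ordinarity hypothesis you have not arranged; moreover slopes alone would not give you $H^{p,q}$ \emph{as a $G$-representation}, only dimensions. The paper's route is different and is the one that works: matching traces of $gF_{\mathfrak p}$ at almost all primes forces, by Serre's lemma, an isomorphism of semisimplified $\Gal(\bar K/K)$-$G$-modules on $\ell$-adic cohomology; taking $\ell=p$, restricting to a decomposition group, and applying the Hodge--Tate functor $D_{HT}$ (Faltings--Tsuji), which is exact on Hodge--Tate representations and $G$-equivariant, recovers each $H^q(X,\Omega_X^p)$ as a $G$-representation from the graded pieces. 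Your concern about eigenvalues being ``in sufficiently general position to invert the system'' is also misplaced: the separation of weights is done once and for all by the Weil bounds $|\alpha_{i,j}|=q^{i/2}$ and linear independence of characters $n\mapsto\alpha^n$, not by choosing special primes. With Lemma 3.2 and the Hodge--Tate argument substituted for your crystalline-slope step, your outline becomes essentially the paper's proof.
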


\begin{remark}
Theorem \ref{thm1} has been proved in \cite[Theorem 1.1]{Zha21}, where the proof uses Nakajima operators. We give a new proof here using the Weil conjecture and p-adic Hodge theory.
\end{remark}

For a complex K3/abelian surface $S$ with an automorphism $g$ of finite order $n$, $H^{0}(S,K_{S})=\mathbb{C}\omega_{S}$ has dimension 1, and we say $g$ acts symplectically on $S$ if it acts trivially on $\omega_{S}$, and $g$ acts non-symplectically otherwise, namely, $g$ sends $\omega_{S}$ to $\zeta_{n}^{k}\omega_{S}$, $0<k<n$, where $\zeta_{n}$ is a primitive $n$-th root of unity.

Denote by $[e(X)]$ the virtual graded G-representation $\sum_{i=0}^{\infty}(-1)^{i}[H^{i}(X,\bbc)]$ for a smooth projective variety $X$ over $\bbc$ with a G-actoin.

\begin{thm}\label{thm2}
Let $G$ be a finite group which acts faithfully and symplectically on a smooth projective K3 surface $S$ over $\overline{\bbf}_{q}$. Suppose $p\nmid|G|$. Then
\[
\sum_{n=0}^{\infty}\Tr(g,[e(S^{[n]})])t^{n}=\exp\left(\sum_{m=1}^{\infty}\sum_{k=1}^{\infty}\frac{\epsilon(\ord(g^{k}))t^{mk}}{k}\right)
\]
for all $g\in G$, where $\epsilon(n)=24\left(n\prod_{p|n}\left(1+\frac{1}{p}\right)\right)^{-1}$. In particular, if $G$ is generated by a single element $g$ of order $N$, then we deduce that
\begin{center}
\begin{tabular}{c||c}
$N$ & $\sum_{n=0}^{\infty}\Tr(g,[e(S^{[n]})])t^{n}$ \\
\hline\hline
$1$ & $t/\eta^{24}(t)$ \\
$2$ & $t/\eta^{8}(t)\eta^{8}(t^{2})$ \\
$3$ & $t/\eta^{6}(t)\eta^{6}(t^{3})$ \\
$4$ & $t/\eta^{4}(t)\eta^{2}(t^{2})\eta^{4}(t^{4})$ \\
$5$ & $t/\eta^{4}(t)\eta^{4}(t^{5})$ \\
$6$ & $t/\eta^{2}(t)\eta^{2}(t^{2})\eta^{2}(t^{3})\eta^{2}(t^{6})$ \\
$7$ & $t/\eta^{3}(t)\eta^{3}(t^{7})$ \\
$8$ & $t/\eta^{2}(t)\eta(t^{2})\eta(t^{4})\eta^{2}(t^{8})$ 
\end{tabular}
\end{center}
where $\eta(t)=t^{1/24}\prod_{n=1}^{\infty}(1-t^{n})$.
\end{thm}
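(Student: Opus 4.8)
The plan is to derive the identity by specializing Theorem~\ref{thm1} and passing to the trace, reducing everything to a fixed-point count for symplectic automorphisms. First I would set $u=v=1$ in the equivariant product formula of Theorem~\ref{thm1} (or in its $\ell$-adic analogue over $\overline{\bbf}_q$ furnished by the same point-counting method). Since $E(X;1,1)=\sum_{p,q}(-1)^{p+q}[H^{p,q}(X)]=\sum_i(-1)^i[H^i(X)]=[e(X)]$ by the Hodge decomposition, the left-hand side becomes $\sum_n[e(S^{[n]})]t^n$. Applying the ring homomorphism $\Tr(g,-)\colon R_{\bbc}(G)\to\bbc$ (additive on $\oplus$, multiplicative on $\otimes$, hence extending to the power-series level) and using the standard identity $\sum_{i\ge0}(-1)^i\Tr(g,\wedge^iV)t^i=\det(1-tg\mid V)$, each inner factor turns into $\det(1-t^mg\mid H^{p,q}(S))$.

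Next I use the cohomological structure of a K3 surface: the odd cohomology vanishes, so every exponent $(-1)^{p+q+1}$ equals $-1$, and $\prod_{p,q}\det(1-t^mg\mid H^{p,q}(S))=\det(1-t^mg\mid H^\ast(S))$. Hence $\sum_n\Tr(g,[e(S^{[n]})])t^n=\prod_{m\ge1}\det(1-t^mg\mid H^\ast(S))^{-1}$. Taking logarithms and expanding $-\log\det(1-t^mg\mid H^\ast(S))=\sum_{k\ge1}\frac{t^{mk}}{k}\Tr(g^k\mid H^\ast(S))$ gives $\log(\mathrm{LHS})=\sum_{m,k\ge1}\frac{t^{mk}}{k}\Tr(g^k\mid H^\ast(S))$. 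Because K3 has only even cohomology, $\Tr(g^k\mid H^\ast(S))=\sum_i(-1)^i\Tr(g^k\mid H^i(S))=L(g^k,S)$, the Lefschetz number, so it remains to prove $L(g^k,S)=\epsilon(\ord(g^k))$.

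This last identity is the heart of the matter and the step I expect to be the main obstacle. By the Grothendieck--Lefschetz trace formula, $L(g^k,S)=\#\mathrm{Fix}(g^k)$ provided $g^k$ has non-degenerate fixed points, which holds because a symplectic automorphism of finite order prime to $p$ has isolated fixed points each contributing $+1$. A power of a symplectic automorphism is again symplectic, so $g^k$ is symplectic of order $\ord(g^k)$, which by Nikulin's classification is at most $8$, and the number of its fixed points depends only on that order: Nikulin's count in characteristic $0$ (and its tame positive-characteristic analogue, valid since $p\nmid|G|$) gives exactly $\epsilon(n)$ fixed points for order $n$, and one checks directly that $\epsilon(1),\dots,\epsilon(8)=24,8,6,4,4,2,3,2$ match these counts. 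Care is needed here to justify the symplectic hypothesis at the cohomological level --- that $g$ acts trivially on $H^{2,0}$, $H^{0,2}$ and on $H^{0,0}$, $H^{2,2}$ --- and to transport Nikulin's count to $\overline{\bbf}_q$ compatibly with the comparison isomorphisms underlying the definition of the Hodge pieces in positive characteristic.

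Finally, for the table I would make the eta products explicit. The eigenvalue multiplicities of $g$ on $H^\ast(S)$ are pinned down by the traces $\Tr(g^k\mid H^\ast(S))=\epsilon(\ord(g^k))$, which lets me factor $\det(1-tg\mid H^\ast(S))=\prod_{d\mid N}(1-t^d)^{a_d}$ into cyclotomic pieces. Substituting $\prod_{m\ge1}(1-(t^d)^m)^{-1}=(t^d)^{1/24}\eta(t^d)^{-1}$ yields $\sum_n\Tr(g,[e(S^{[n]})])t^n=t^{\frac1{24}\sum_d d\,a_d}\prod_{d\mid N}\eta(t^d)^{-a_d}$, where $\sum_d d\,a_d=\deg\det(1-tg\mid H^\ast(S))=\dim H^\ast(S)=24$, so the prefactor is always $t$. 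Carrying out the multiplicity computation for $N=1,\dots,8$ (for instance $N=2$ gives $a_1=a_2=8$, hence $t/\eta^8(t)\eta^8(t^2)$) then reproduces the listed entries.
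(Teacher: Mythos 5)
Your proposal follows essentially the same route as the paper: specialize the Hilbert-scheme product formula (in its $l$-adic form over $\overline{\bbf}_q$) to traces of $g$ on $[e(S^{[n]})]$, obtain $\exp\left(\sum_{m,k}\frac{t^{mk}}{k}\Tr(g^{k},[e(S)])\right)$ using that a K3 surface has only even cohomology, and then identify $\Tr(g^{k},[e(S)])$ with $\epsilon(\ord(g^{k}))$ before reading off the eta quotients. The only difference is at that middle step, where you re-derive the trace identity from the Lefschetz fixed-point formula and the fixed-point counts for tame symplectic automorphisms, whereas the paper simply cites Dolgachev--Keum's result that $H^{*}(S,\bbq_{l})$ is a Mathieu representation of $G$ --- which is exactly the statement your Lefschetz argument proves.
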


\begin{remark}
If $g$ acts symplectically on $S$, then $g$ has order $N\leq 8$ by \cite[Theorem 3.3]{DK09} since the $G$-action is tame. These eta quotients coincide with the results in the characteristic zero case. See \cite{BG19}, \cite[Lemma 3.1]{BO18}, or \cite{Zha21}.
\end{remark}

\begin{thm}\label{thm3}
Let $g$ be a symplectic automorphism (fixing the origin) of order $N$ on an abelian surface $S$ over $\bbc$. Then
\begin{center}
\begin{tabular}{c||c}
$N$ & $\sum_{n=0}^{\infty}\Tr(g,[e(S^{[n]})])t^{n}$ \\
\hline\hline
$1$ & $1$ \\
$2$ & $\eta^{8}(t^2)/{\eta^{16}(t)}$ \\
$3$ & $\eta^{3}(t^3)/{\eta^{9}(t)}$ \\
$4$ & ${\eta^{4}(t^4)}/{\eta^{4}(t)\eta^{6}(t^2)}$ \\
$6$ & ${\eta^{4}(t^{6})}/{\eta(t)\eta^{4}(t^2)\eta^{5}(t^3)}$ \\

\end{tabular}
\end{center}
\end{thm}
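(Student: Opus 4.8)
The plan is to apply Theorem \ref{thm1} directly, since it already establishes the equivariant G\"ottsche formula over $\bbc$ for the Hilbert schemes $S^{[n]}$, and then to specialize. The passage from the full Hodge--Deligne series to $\sum_n\Tr(g,[e(S^{[n]})])t^n$ proceeds in two steps: first apply $\Tr(g,-)$, which is a ring homomorphism $R_{\bbc}(\langle g\rangle)\to\bbc$ (additive on $\oplus$, multiplicative on $\otimes$), and then set $u=v=1$. Since $[e(X)]=E(X;1,1)$ as virtual representations (because $H^i=\bigoplus_{p+q=i}H^{p,q}$), this specialization turns Theorem \ref{thm1} into
\[
\sum_{n=0}^{\infty}\Tr(g,[e(S^{[n]})])t^{n}=\prod_{m=1}^{\infty}\prod_{p,q}\left(\sum_{i=0}^{h_{p,q}}(-1)^{i}\Tr\!\left(g,[\wedge^{i}H^{p,q}(S,\bbc)]\right)t^{mi}\right)^{(-1)^{p+q+1}}.
\]

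Next I would collapse each inner sum using the identity $\sum_{i}(-1)^{i}\Tr(g,\wedge^{i}W)x^{i}=\det(1-x\,g|_{W})$, so that the $(p,q)$-factor becomes $\det(1-t^{m}g|_{H^{p,q}})=\prod_j(1-\nu_j t^m)$, where the $\nu_j$ are the eigenvalues of $g$ on $H^{p,q}$. The series then reads $\prod_m\prod_{p,q}\det(1-t^{m}g|_{H^{p,q}})^{(-1)^{p+q+1}}$, with the odd-weight pieces in the numerator and the even-weight pieces in the denominator.

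The heart of the argument, and the step I expect to require the most care, is determining the eigenvalues of $g$ on every Hodge piece. Here I use that $S$ is abelian, so $H^{*}(S,\bbc)=\wedge^{*}H^{1}(S,\bbc)$ and every piece is built functorially from $H^{1,0}\oplus H^{0,1}$. Because $g$ fixes the origin it acts linearly on $H^{1,0}(S)$ with two eigenvalues $\zeta,\zeta'$, and the symplectic hypothesis (trivial action on $H^{2,0}=\wedge^{2}H^{1,0}$) forces $\zeta\zeta'=1$, i.e.\ $\zeta'=\zeta^{-1}$. Since $g$ preserves $H^{1}(S,\bbz)\cong\bbz^{4}$, the characteristic polynomial $(x^{2}-(\zeta+\zeta^{-1})x+1)^{2}$ has integer coefficients, so $\zeta+\zeta^{-1}=2\cos(2\pi/N)\in\bbz$; this crystallographic restriction is exactly what pins $N$ down to $\{1,2,3,4,6\}$ and accounts for the rows of the table. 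From $\zeta$ I then read off the eigenvalues on each piece: $1$ on $H^{0,0},H^{2,0},H^{0,2},H^{2,2}$; the pair $\{\zeta,\zeta^{-1}\}$ on each of $H^{1,0},H^{0,1},H^{2,1},H^{1,2}$; and $\{1,1,\zeta^{2},\zeta^{-2}\}$ on $H^{1,1}$.

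Substituting these eigenvalues gives, for each $m$, the single rational factor
\[
\frac{\bigl[(1-\zeta t^{m})(1-\zeta^{-1}t^{m})\bigr]^{4}}{(1-t^{m})^{6}(1-\zeta^{2}t^{m})(1-\zeta^{-2}t^{m})},
\]
and the remaining work is a case-by-case simplification of $\prod_m$ of this expression for $\zeta$ a primitive $N$-th root of unity. For each $N$ I would factor the cyclotomic numerators and denominators as products of $(1-t^{km})$ (for example $(1-\zeta t^m)(1-\zeta^{-1}t^m)=1+t^m+t^{2m}=(1-t^{3m})/(1-t^m)$ when $N=3$, and the analogue via $\Phi_6$ when $N=6$), take the product over $m$, and rewrite each $\prod_m(1-t^{km})$ as $\eta(t^k)t^{-k/24}$; the fractional powers of $t$ cancel by construction, leaving precisely the eta quotients in the table. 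Finally I would note that the result depends only on $N$, not on the chosen primitive root $\zeta$ nor on the particular symplectic automorphism, since every eigenvalue datum entering the computation is a symmetric function of $\zeta$ and $\zeta^{-1}$.
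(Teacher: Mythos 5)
Your proposal is correct and follows essentially the same route as the paper: the identity you obtain by specializing Theorem \ref{thm1} at $u=v=1$ and taking traces is exactly the paper's Remark \ref{rmk} (stated there in exponential form), and the remainder is the substitution of the eigenvalues of $g$ on $H^{*}(S,\bbc)$ followed by cyclotomic bookkeeping, ending in the same per-$m$ factor and the same eta quotients. The only (minor) divergence is that the paper reads off the action on $H^{1}$ from Fujiki's explicit matrices and simplifies via the exponential/inclusion--exclusion form, whereas you derive the eigenvalue pattern $\{\zeta^{\pm 1}\}$ a priori from the symplectic and integrality constraints and factor the cyclotomic products directly; both are valid.
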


\begin{remark}
If $g$ is a symplectic automorphism on a complex abelian surface, then $g$ has order $1,2,3,4$ or $6$ by \cite[Lemma 3.3]{Fuj88}. These eta quotients coincide with the results of \cite[Theorem 1.1]{Pie21} when $G$ is cyclic.
\end{remark}

Define a multiplication $\odot$ on the ring of power series $R_{\bbc}(G)[[u,v,w]]$ by $u^{n_1}v^{m_1}w^{l_1}\odot u^{n_2}v^{m_2}w^{l_2}:=u^{n_1+n_2}v^{m_1+m_2}w^{gcd(l_1,l_2)}$. 

\begin{thm}\label{thm4}
Let $A$ be an abelian surface over $\bbc$ with a $G$-action. Let $K_n(A)$ be the generalized Kummer variety. Then we have the following equality as virtual $G$-representations.
\[
\sum_{n=0}^{\infty}E(K_n(A);u,v)t^n=\frac{(w\frac{d}{dw})^4}{E(A)}
\]
\[
\bigodot_{m=1}^{\infty}\left(1+w^{m}\left(-1+\prod_{p,q}\left({\sum_{i=0}^{h_{p,q}}(-1)^{i}[\wedge^{i}H^{p,q}(S,\bbc)]u^{i(p+m-1)}v^{i(q+m-1)}t^{mi}} \right)^{(-1)^{p+q+1}}\right)\right).
\]

\end{thm}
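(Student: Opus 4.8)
The plan is to rerun, for the summation-map fibers, exactly the finite-field point-counting argument that underlies Theorem~\ref{thm1}, and then to transport the resulting identity of equivariant Frobenius traces to the Hodge level by $p$-adic Hodge theory. First I would spread $A$ together with its $G$-action over a finitely generated $\bbz$-algebra and reduce modulo a prime of good reduction $p$ with $p\nmid|G|$, so that $A/\overline{\bbf}_q$ carries commuting actions of $G$ and of geometric Frobenius $F$. Writing $\sigma_n\colon A^{[n+1]}\to A$ for the composite of the Hilbert--Chow morphism with the summation map $A^{(n+1)}\to A$, the generalized Kummer variety is the fiber $K_n(A)=\sigma_n^{-1}(0)$, and $\sigma_n$ is equivariant for both $G$ and $F$ because $G$ fixes the origin. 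By Grothendieck--Lefschetz it then suffices to prove, for every $g\in G$ and every $q$, the corresponding identity for the generating series of the counts $\#\{x\in K_n(A): (gF)(x)=x\}$; the passage from these traces to the stated equality of virtual $G$-representations in $R_{\bbc}(G)[[u,v]]$ is the same $p$-adic Hodge theory step already used for Theorem~\ref{thm1}.

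Next I would compute the fiberwise count through the stratification that produces G\"ottsche's formula. Stratifying $A^{[n+1]}$ by the partition type $\alpha=(1^{\alpha_1}2^{\alpha_2}\cdots)\vdash n+1$ of the underlying $0$-cycle, each stratum is a bundle of products of punctual Hilbert schemes over the space of distinct points carrying the prescribed multiplicities; these punctual factors are identical to the ones in Theorem~\ref{thm1} and assemble into the factors
\[
P_m:=\prod_{p,q}\left(\sum_{i=0}^{h_{p,q}}(-1)^{i}[\wedge^{i}H^{p,q}(A,\bbc)]u^{i(p+m-1)}v^{i(q+m-1)}t^{mi}\right)^{(-1)^{p+q+1}}.
\]
The only new feature is the constraint $\sigma_n=0$: it replaces the free configuration base $A^{|\alpha|}$ by the fiber over $0$ of the homomorphism $\phi_\alpha\colon A^{|\alpha|}\to A$, $(p_j)\mapsto\sum_j m_j p_j$, whose kernel is an abelian variety of dimension $2(|\alpha|-1)$ with component group $A[\gcd(\alpha)]$ of order $\gcd(\alpha)^4$. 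Thus, relative to the unconstrained count governed by Theorem~\ref{thm1}, the Kummer count trades one free translation factor $E(A)$ for the torsion datum indexed by $A[\gcd(\alpha)]$.

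This is precisely what the operator on the right-hand side is designed to encode, and the second step is to match the two. In $\bigodot_{m}\bigl(1+w^{m}(-1+P_m)\bigr)$, expanding the $\odot$-product selects a finite set $M$ of part-sizes that actually occur and records it as $w^{\gcd(M)}$ by the rule $w^{l_1}\odot w^{l_2}=w^{\gcd(l_1,l_2)}$; since $M$ is the support of $\alpha$, one has $\gcd(M)=\gcd(\alpha)$. Applying $(w\tfrac{d}{dw})^4$ then brings down $\gcd(\alpha)^4$, matching $\#A[\gcd(\alpha)]$, and dividing by $E(A)$ (whose non-equivariant specialization is the classical prefactor $((1-u)(1-v))^2$) removes the single redundant translation factor. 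I would therefore organize the total count as a sum over $\alpha$, identify the $\alpha$-summand with the corresponding term of $\frac{(w\frac{d}{dw})^4}{E(A)}\bigodot_m(1+w^m(-1+P_m))$, and cross-check the $t$-, $u$-, $v$-degrees against the G\"ottsche--Soergel formula recalled in the introduction as the non-equivariant shadow.

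The hard part will be the equivariant and Frobenius-equivariant bookkeeping of the torsion component group. A naive count yields $\#\bigl(A[\gcd(\alpha)]^{gF}\bigr)$ rather than the bare $\gcd(\alpha)^4$, so the genuine content is that the components of $\sigma_n^{-1}(0)$ enter through the permutation module $\bbc[A[\gcd(\alpha)]]$, whose trace at $gF$ is exactly this fixed-point count and whose dimension is $\gcd(\alpha)^4$. The delicate point is therefore to reconcile the numeric operator $(w\frac{d}{dw})^4$ with the actual $G$-action on $A[\gcd(\alpha)]$: one must either refine the variable $w$ so that it carries the class of this permutation module, or verify that in the cases of interest (for instance when the relevant automorphisms act trivially on the torsion) the two agree. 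I expect this to use the triviality of translations on $H^*(A)$ --- so that the torsor structure of $\ker\phi_\alpha$ is visible only through its component group --- together with an induction on $|\alpha|$ controlling how this torsion datum interacts with the Nakajima-type factors $P_m$; confirming that it is faithfully implemented by $\odot$ and $(w\frac{d}{dw})^4$ is where the argument will require the most care.
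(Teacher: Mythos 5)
Your proposal follows essentially the same route as the paper: reduce to counting $gF_q$-fixed points on a good reduction, stratify by the partition type of the supporting cycle, extract the $\gcd(\alpha)^4$ and $1/E(A)$ factors from the fibers of the summation map, match these with the $\odot$-product and $(w\frac{d}{dw})^4$, and lift to Hodge structures by $p$-adic Hodge theory. The subtlety you rightly flag --- obtaining the bare $\gcd(\alpha)^4$ rather than $\#\bigl(A[\gcd(\alpha)]^{gF_q}\bigr)$ --- is handled in the paper by observing that $gF_q$ is the Frobenius of a twist of $A$ (so Lemmas \ref{lemma1} and \ref{lemma2} reduce to G\"ottsche's) and by the freedom in Theorem \ref{padichodge} to pass to large powers of $q$, after which the relevant torsion is rational and the \'etale degree of multiplication by $\gcd(\alpha)$ gives exactly $\gcd(\alpha)^4$.
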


When we say $S$ is a surface with a $G$-action over a field $K$, we mean that both $S$ and the $G$-action can be defined over $K$. 

\begin{center}
\sc{Acknowledgements}
\end{center}
\vspace{0.1 in}
I thank Michael Larsen for many valuable discussions throughout this work.

\section{Preliminaries}
Let $X$ be a smooth projective variety over $\mathbb{C}$. Then we can choose a finitely generated $\mathbb{Z}$-subalgebra $\mathcal{R}\subset\mathbb{C}$ such that $X\cong\mathcal X\times_{\mathcal S}\text{Spec}\mathbb{C}$ for a regular projective scheme $\mathcal X$ over $\mathcal{S}=\text{Spec}\mathcal{R}$, and we can choose a maximal ideal $\mathfrak q$ of $\mathcal R$ such that $\mathcal X$ has good reduction modulo $\mathfrak q$. Since there are comparison theorems between \'etale cohomology and singular cohomology, we focus on characteristic $p$.

Now let $X$ be a quasi-projective variety over $\overline{\bbf}_{p}$ with an automorphism $\sigma$ of finite order. Suppose $X$ and $\sigma$ can be defined over some finite field $\mathbb{F}_q$. Let $F_q$ be the corresponding geometric Frobenius. Then for $n \geq 1$, the composite $F_{q}^{n}\circ\sigma$ is the Frobenius map relative to some new way of lowering the field of definition of $X$ from $\overline{\bbf}_{p}$ to $\mathbb{F}_{q^n}$ (\cite[Prop.3.3]{DL76} and \cite[Appendix(h)]{Ca85}). Then the Grothendieck trace formula implies that $\sum_{k=0}^{\infty}(-1)^{k}\text{Tr}((F_{q}^{n}\sigma)^{*},H_{c}^{k}(X,\mathbb{Q}_{l}))$ is the number of fixed points of $F_{q}^{n}\sigma$, where $H_{c}^{k}(X,\mathbb{Q}_{l})$ are the compactly supported $l$-adic cohomology groups.

\begin{lemma}\label{Weil}
Let $X$ and $Y$ be two smooth projective varieties over $\overline{\bbf}_{p}$ with finite group $G$-actions. Suppose $X,Y$ and the actions of G can be defined over $\bbf_{q}$, where $q$ is a $p$ power. If $|{X}(\overline{\mathbb{F}}_{p})^{gF_{q^n}}|=|{Y}(\overline{\mathbb{F}}_{p})^{gF_{q^n}}|$ for every $n\geq 1$ and $g\in G$, then $H^{i}(X,\bbq_{l})\cong H^{i}(Y,\bbq_{l})$ as $G$-representations for every $i\geq 0$.
\end{lemma}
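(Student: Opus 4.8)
The plan is to reduce the statement to an equality of virtual characters by combining the Grothendieck--Lefschetz trace formula with Deligne's purity theorem. Since $X$ is smooth and projective, $H_c^k = H^k$, so for each $g \in G$ and $n \ge 1$ the trace formula recalled above reads
\[
|X(\overline{\bbf}_p)^{gF_{q^n}}| = \sum_{k} (-1)^k \Tr\big((gF_{q^n})^*, H^k(X,\bbq_l)\big),
\]
and similarly for $Y$. First I would record that because $g$ and its action are defined over $\bbf_q$, the operators $g^*$ and $F_q^*$ on each $H^k(X,\bbq_l)$ commute, and that $g^*$ has finite order, hence is semisimple. I would then fix an embedding $\iota:\overline{\bbq}_l \hookrightarrow \bbc$ and simultaneously triangularize the commuting pair over $\overline{\bbq}_l$, taking $g^*$ diagonal; this produces eigenvalue pairs $(\zeta_{k,j},\alpha_{k,j})$ on $H^k(X)$ with each $\zeta_{k,j}$ a root of unity, so that $\Tr((gF_{q^n})^*,H^k) = \sum_j \zeta_{k,j}\alpha_{k,j}^{\,n}$.

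Next I would view each count as a function of $n$, namely $|X^{gF_{q^n}}| = \sum_\beta a_X(\beta)\,\beta^n$, where $\beta$ runs over the finitely many values $\iota(\alpha_{k,j})$ and $a_X(\beta) = \sum_{k,j:\,\iota(\alpha_{k,j})=\beta}(-1)^k\zeta_{k,j}$. Because the functions $n \mapsto \beta^n$ attached to distinct $\beta$ are linearly independent over $\bbc$ (a Vandermonde argument), the hypothesis that the two counts agree for all $n \ge 1$ forces $a_X(\beta) = a_Y(\beta)$ for every $\beta$. The crucial step is now to separate the cohomological degrees: by Deligne's theorem the Frobenius eigenvalues on $H^k$ satisfy $|\iota(\alpha_{k,j})| = q^{k/2}$, and since the numbers $q^{k/2}$ are pairwise distinct, any fixed $\beta$ receives contributions to $a_X(\beta)$ from a single degree $k(\beta)$. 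Summing the equalities $a_X(\beta)=a_Y(\beta)$ over all $\beta$ of a given absolute value $q^{k/2}$ then collapses to
\[
(-1)^k\,\Tr(g^*,H^k(X,\bbq_l)) = (-1)^k\,\Tr(g^*,H^k(Y,\bbq_l)),
\]
that is, $\Tr(g^*,H^k(X)) = \Tr(g^*,H^k(Y))$ for every $k$.

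Finally, since this holds for every $g \in G$ and $\bbq_l$ has characteristic zero, equality of characters implies $H^k(X,\bbq_l) \cong H^k(Y,\bbq_l)$ as $G$-representations for all $k$, which is the claim. I expect the main obstacle to be the weight-separation step: everything hinges on invoking Deligne's purity to guarantee that distinct degrees contribute Frobenius eigenvalues of distinct absolute value, so that the single mixed identity produced by the trace formula can be split degree by degree. The subsidiary point requiring care is the simultaneous triangularization, which needs the commutation of $g^*$ with $F_q^*$ together with the semisimplicity of the finite-order operator $g^*$; once these are in place the remaining ingredients, namely the linear independence of the exponential sequences $n \mapsto \beta^n$ and the fact that characters determine representations over a field of characteristic zero, are routine.
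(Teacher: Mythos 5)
Your proposal is correct and follows essentially the same route as the paper's proof: apply the Grothendieck--Lefschetz trace formula to $gF_{q^n}$, use commutativity and semisimplicity of $g^*$ to extract eigenvalue pairs, invoke linear independence of the sequences $n\mapsto\beta^n$ together with Deligne's purity ($|\alpha_{k,j}|=q^{k/2}$) to separate degrees, and conclude by equality of characters over a field of characteristic zero. Your write-up is, if anything, slightly more careful than the paper's at the weight-separation step.
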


\begin{proof}
Fix $g\in G$. Denote by $F_{q}$ the geometric Frobenius over $\bbf_{q}$. Since the finite group action is defined over $\bbf_{q}$, the action $g$ commutes with $F_{q}$ and the action of $g$ on the cohomology group is semisimple. There exists a basis of the cohomology group such that the actions of $g$ and $F_{q}$ are in Jordan normal forms simultaneously. Let $\alpha_{i,j},j=1,2,...,a_i$ (resp.  $\beta_{i,j},j=1,2,...,b_i$) denote the eigenvalues of $F_q$ acting on $H^{i}({X}, \mathbb{Q}_l)$ (resp. $H^{i}({Y}, \mathbb{Q}_l)$) in such a basis, where $a_i$ (resp. $b_i$) is the $i$-th betti number. Let $c_{i,j},j=1,2,...,a_i$ (resp.  $d_{i,j},j=1,2,...,b_i$) denote the eigenvalues of $g$ acting on the same basis of $H^{i}(X, \mathbb{Q}_l)$ (resp. $H^{i}({Y}, \mathbb{Q}_l)$). Then the Grothendieck trace formula (\cite[Prop.3.3]{DL76} and \cite[Appendix(h)]{Ca85}) implies that
\[
|{X}(\overline{\mathbb{F}}_{p})^{gF_{q^n}}|=\sum_{i=0}^{\infty}(-1)^{i}\text{Tr}((gF_{q^{n}})^{*},H^{i}({X}.\mathbb{Q}_{l}))
\]

Since $|{X}(\overline{\mathbb{F}}_{p})^{gF_{q^n}}|=|{Y}(\overline{\mathbb{F}}_{p})^{gF_{q^n}}|$ for every $n\geq 1$, we have 
\[
\sum_{i=0}^{\infty}(-1)^{i}\sum_{j=1}^{a_i}c_{i,j}\alpha_{i,j}^{n}=\sum_{i=0}^{\infty}(-1)^{i}\sum_{j=1}^{b_i}d_{i,j}\beta_{i,j}^{n}
\]
for every $n\geq 1$. By linear independence of the characters $\chi_{\alpha}:\mathbb{Z}^{+}\rightarrow\mathbb{C}, n\mapsto \alpha^n$ and the fact that $\alpha_{i,j},\beta_{i,j},j=1,2,...$ all have absolute value $q^{i/2}$ by Weil's conjecture, we deduce that $a_{i}=b_{i}$ and $\sum_{j=1}^{a_i}c_{i,j}=\sum_{j=1}^{b_i}d_{i,j}$ for each $i$. But since $g$ is arbitrary, this implies that the $G$-representations $H^{i}({X},\mathbb{Q}_{l})$ and $H^{i}({Y},\mathbb{Q}_{l})$ are the same. 
\end{proof}

\begin{prop}\label{symmetric}
Let $X$ be a smooth projective variety with a G-action over $\bbf_{q}$. Denote the dimension of $X$ by $N$. Then 
\[
\sum_{k=0}^{\infty} \sum_{i=0}^{\infty}(-1)^{i}[H^{i}(X_{\overline{\bbf}_{p}}^{(k)},\bbq_{l})]z^{i}t^{k}=\prod_{j=0}^{2N}\left(\sum_{i=0}^{b_j}(-1)^{i}[\wedge^{i}H^{j}({X_{\overline{\bbf}_{p}}},\bbq_{l})]z^{ij}t^{i}\right)^{(-1)^{j+1}},
\]
where the coefficients lie in $R_{\bbq_{l}}(G)$. 
\end{prop}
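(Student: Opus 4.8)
The plan is to prove the identity directly in $R_{\bbq_{l}}(G)[z][[t]]$, without passing to Frobenius or to characters, by recognizing its left-hand side as the generating function of the \emph{super-symmetric powers} of the graded $G$-module $H^{*}(X_{\overline{\bbf}_{p}},\bbq_{l})$ and its right-hand side as the same object repackaged through the Koszul identity. First I would establish the $G$-equivariant, degree-preserving isomorphism of graded $G$-representations
\[
H^{*}(X_{\overline{\bbf}_{p}}^{(k)},\bbq_{l})\;\cong\;\big(H^{*}(X_{\overline{\bbf}_{p}},\bbq_{l})^{\otimes k}\big)^{S_{k}}.
\]
This comes from writing $X^{(k)}=X^{k}/S_{k}$ with $G$ acting diagonally on $X^{k}$ (hence commuting with $S_{k}$), from the comparison $H^{*}(Y/\Gamma,\bbq_{l})\cong H^{*}(Y,\bbq_{l})^{\Gamma}$ for a finite group $\Gamma$ acting on a quasi-projective $Y$ (valid in any characteristic $\ne l$, since $\bbq_{l}$ has characteristic $0$ and the averaging idempotent $\tfrac{1}{|\Gamma|}\sum_{\gamma}\gamma$ exists), and from the $S_{k}$- and $G$-equivariant Künneth isomorphism for $X^{k}$.

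Next I would decompose the invariants. Splitting $H^{*}(X,\bbq_{l})=\bigoplus_{j}H^{j}$ and using that the Künneth isomorphism makes $S_{k}$ permute the tensor factors \emph{with Koszul signs}, the invariants become the super-symmetric power
\[
\big(H^{*}(X)^{\otimes k}\big)^{S_{k}}\;\cong\;\bigoplus_{\sum_{j}m_{j}=k}\ \bigotimes_{j}P^{m_{j}}(H^{j}),\qquad P^{m}=\begin{cases}\mathrm{Sym}^{m}&j\text{ even},\\ \wedge^{m}&j\text{ odd},\end{cases}
\]
where the summand $\bigotimes_{j}P^{m_{j}}(H^{j})$ sits in cohomological degree $\sum_{j}jm_{j}$. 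Marking the number of points $k=\sum_{j}m_{j}$ by $t$, the total cohomological degree $i=\sum_{j}jm_{j}$ by $z$, and carrying the sign $(-1)^{i}=\prod_{j}(-1)^{jm_{j}}$, the left-hand side factors as
\[
\sum_{k}\sum_{i}(-1)^{i}[H^{i}(X^{(k)})]z^{i}t^{k}=\prod_{j=0}^{2N}\left(\sum_{m\ge0}(-1)^{jm}[P^{m}(H^{j})]\,(z^{j}t)^{m}\right),
\]
the product being finite since $H^{j}(X)=0$ for $j>2N$.

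Finally I would unify the even and odd factors using the Koszul identity in $R_{\bbq_{l}}(G)[[s]]$,
\[
\left(\sum_{m\ge0}(-1)^{m}[\wedge^{m}W]s^{m}\right)\left(\sum_{m\ge0}[\mathrm{Sym}^{m}W]s^{m}\right)=1,
\]
which holds $G$-equivariantly from the exactness of the Koszul complex. For odd $j$ the factor above is already $\sum_{m}(-1)^{m}[\wedge^{m}H^{j}](z^{j}t)^{m}$, matching the exponent $(-1)^{j+1}=+1$; for even $j$ it equals $\sum_{m}[\mathrm{Sym}^{m}H^{j}](z^{j}t)^{m}=\big(\sum_{m}(-1)^{m}[\wedge^{m}H^{j}](z^{j}t)^{m}\big)^{-1}$, matching $(-1)^{j+1}=-1$. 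Assembling the factors yields exactly $\prod_{j=0}^{2N}\big(\sum_{i}(-1)^{i}[\wedge^{i}H^{j}]z^{ij}t^{i}\big)^{(-1)^{j+1}}$, the right-hand side.

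The hard part is the first step: checking that the finite-quotient comparison $H^{*}(X^{(k)},\bbq_{l})\cong H^{*}(X^{k},\bbq_{l})^{S_{k}}$ is simultaneously $G$-equivariant and compatible with the cohomological grading, and that the Künneth $S_{k}$-action really is the signed permutation action, since all of the Koszul-sign bookkeeping — and hence the precise appearance of $\wedge^{i}$ versus $\mathrm{Sym}^{i}$ — hinges on getting this right. One could instead verify the numerical shadow of the identity, for each $g\in G$, by Grothendieck--Lefschetz applied to the twisted Frobenius $gF_{q^{n}}$ on $X^{(k)}$ together with the zeta-function factorization $\sum_{k}|(X^{(k)})^{gF_{q^{n}}}|t^{k}=\prod_{j}\det(1-t\,gF_{q^{n}}\mid H^{j}(X))^{(-1)^{j+1}}$; but this only recovers the combined $gF_{q^{n}}$-trace and does not by itself separate the $G$-action from Frobenius to produce the $z$-graded statement, which is why I would argue through the module structure.
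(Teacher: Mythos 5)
Your proof is correct, but it takes a genuinely different route from the paper's. The paper argues entirely through point counting: it starts from the zeta-function identity $\exp\bigl(\sum_{r}|X(\bbf_{q^r})|t^r/r\bigr)=\sum_{k}|X^{(k)}(\bbf_q)|t^k=\prod_{j}\det(1-\alpha_{j,\bullet}t)^{(-1)^{j+1}}$, replaces $F_q$ by the twisted Frobenius $gF_{q}^{n}$ (which is the Frobenius of a twisted form of $X$, so Grothendieck--Lefschetz still applies), and then extracts the graded, $g$-isotypic information by the argument of Lemma \ref{Weil}: varying $n$ and using linear independence of the characters $n\mapsto\alpha^{n}$ together with purity ($|\alpha_{j,i}|=q^{j/2}$) separates the cohomological degrees and pins down $\Tr(g,H^{i}(X^{(k)}))$. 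So your closing claim that the Grothendieck--Lefschetz route ``does not by itself separate the $G$-action from Frobenius'' undersells it: that separation is precisely what Lemma \ref{Weil} accomplishes, at the cost of invoking the Weil conjectures. Your route instead proves the stronger structural statement $H^{*}(X^{(k)})\cong(H^{*}(X)^{\otimes k})^{S_k}$ as graded $G$-modules and deduces the identity formally from the super-symmetric-power decomposition and the $G$-equivariant Koszul resolution; this is the equivariant Macdonald formula, it needs no purity and no finite fields (it would work verbatim over $\bbc$), and it identifies the actual representation in each bidegree rather than only the virtual alternating sum. What you must still supply --- and you correctly flag it as the crux --- are the standard facts that $H^{*}(Y/\Gamma,\bbq_l)\cong H^{*}(Y,\bbq_l)^{\Gamma}$ for a finite group acting on a quasi-projective variety (via $R\pi_{*}=\pi_{*}$ for the finite quotient map and exactness of $\Gamma$-invariants in characteristic zero, valid even though $X^{(k)}$ is singular) and that the K\"unneth isomorphism intertwines the geometric $S_k$-action with the Koszul-signed permutation action; both are standard and underlie G\"ottsche's computations, so your proof is complete once they are cited. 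One practical remark: the paper's later arguments (e.g.\ the proof of Theorem \ref{thm1}) reuse the point-count identity itself, not merely its representation-theoretic consequence, which is why the paper keeps everything inside the counting framework.
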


\begin{proof}
By the Weil conjectures, we have 
\[
\text{exp}(\sum_{r=1}^{\infty}|{X}(\mathbb{F}_{q^r})|\frac{t^r}{r})=\sum_{k=0}^{\infty}|{X}^{(k)}(\mathbb{F}_{q})|t^{k}=\sum_{k=0}^{\infty}|{X}^{(k)}(\overline{\bbf}_{p})^{F_{q}}|t^{k}=\prod_{j=0}^{2N}\left(\prod_{i=1}^{b_j}(1-\alpha_{j,i}t)\right)^{(-1)^{j+1}},
\]
where $\alpha_{j,i}$ are the eigenvalues of $F_{q}$ on $H^{j}(X_{\overline{\bbf}_{p}},\bbq_{l})$.

By the discussion at the beginning of the section and the Grothendieck trace formula, we deduce that
\[
\sum_{k=0}^{\infty}\sum_{m=0}^{\infty}(-1)^{m}\sum_{i}h_{k,m,i}\beta_{k,m,i}^{n}t^{k}=\prod_{j=0}^{2N}\left(\prod_{i=1}^{b_j}(1-g_{j,i}\alpha_{j,i}^{n}t)\right)^{(-1)^{j+1}}
\]
where $h_{k,m,i}$ (resp. $\beta_{k,m,i}$) are the eigenvalues of $g$ (resp. $F_q$) on $H^{m}({X}_{\overline{\bbf}_{p}}^{(k)},\bbq_{l})$, and $g_{j,i}$ are the eigenvalues of $g$ on $H^{j}({X}_{\overline{\bbf}_{p}},\mathbb{Q}_{l})$. Hence we deduce that the trace of $g$ on the left hand side equals the trace of $g$ on the right hand side for each graded piece in the equality in Proposition \ref{symmetric} by the proof of Lemma \ref{Weil}.
\end{proof}

We obtain the information of Hodge pieces via $p$-adic Hodge theory by using an equivariant version of the method in \cite[\S 4]{It03}. 

\begin{prop}\cite[I. 2.3]{Ser68}\label{galoisrep}
Let $K$ be a number field, $m,m'\geq 1$ be integers, and $l$ be a prime number. Let
\[
\rho\colon\Gal(\bar{K}/K)\to GL(m,\bbq_l),\ \ \ \rho'\colon\Gal(\bar{K}/K)\to GL(m',\bbq_l)
\]
be continuous $l$-adic $\Gal(\bar{K}/K)$-representations such that $\rho$ and $\rho'$ are unramified outside a finite set $S$ of maximal ideals of $\mathcal{O}_K$. If
\[
\Tr(\rho(Frob_{\mathfrak{p}}))=\Tr(\rho'(Frob_{\mathfrak{p}}))\ \ \text{for all maximal ideals }\mathfrak{p}\notin S,
\]
then $\rho$ and $\rho'$ have the same semisimplifications as $\Gal(\bar{K}/K)$-representations. Here $Frob_{\mathfrak{p}}$ is the geometric Frobenius at $\mathfrak{p}$.
\end{prop}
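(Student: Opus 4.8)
The plan is to combine the Chebotarev density theorem with the elementary fact that, over a field of characteristic zero, a semisimple representation is determined up to isomorphism by its character. First I would restrict attention to the quotient through which both representations factor: since $\rho$ and $\rho'$ are unramified outside $S$, each factors through $\Gal(K_S/K)$, where $K_S\subset\bar K$ is the maximal extension of $K$ unramified outside $S$. This group is profinite, hence compact, and $\rho,\rho'$ are continuous representations of it.

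Next I would invoke the Chebotarev density theorem to make the hypothesis global: the set of conjugacy classes of Frobenius elements $Frob_{\mathfrak{p}}$ for $\mathfrak{p}\notin S$ is dense in $\Gal(K_S/K)$, because in every finite quotient $\Gal(L/K)$ with $L/K$ unramified outside $S$ each conjugacy class is the image of infinitely many such $Frob_{\mathfrak{p}}$. Since $\rho$ and $\rho'$ are continuous and the trace is a continuous class function on $GL(m,\bbq_l)$, the maps $g\mapsto\Tr\rho(g)$ and $g\mapsto\Tr\rho'(g)$ are continuous on $\Gal(K_S/K)$; by hypothesis they agree on the dense subset of Frobenius elements, so they coincide on all of $\Gal(K_S/K)$.

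It then remains to pass from equality of characters to isomorphism of semisimplifications. The trace of a representation is unaffected by semisimplification, so $\rho^{\mathrm{ss}}$ and $(\rho')^{\mathrm{ss}}$ have the same character. Decomposing each into its isotypic pieces, equality of the semisimplifications follows from the linear independence of the characters of pairwise non-isomorphic irreducible representations in characteristic zero, which pins down the multiplicities. Since $\bbq_l$ is not algebraically closed, I would first base change to $\overline{\bbq}_l$ to run this linear-independence argument, and then descend the resulting isomorphism to $\bbq_l$ via the Noether--Deuring theorem.

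The difficulty here is diffuse rather than concentrated in one hard step. Two points require care: one must check that Chebotarev genuinely yields density in the \emph{profinite} group $\Gal(K_S/K)$, so that continuity of the trace may legitimately propagate the equality from Frobenius elements to all group elements; and one must keep in mind that a character determines only the semisimplification --- two representations with identical traces on every group element need not be isomorphic when non-split extensions occur --- which is precisely why the conclusion is phrased at the level of semisimplifications.
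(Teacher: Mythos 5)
Your argument is correct and is precisely the standard proof of this result, which the paper does not reprove but simply cites from \cite[I.~2.3]{Ser68}: Chebotarev density gives that the Frobenius classes for $\mathfrak{p}\notin S$ are dense in $\Gal(K_S/K)$, continuity of the traces propagates the equality to the whole group, and the Brauer--Nesbitt principle (linear independence of irreducible characters in characteristic zero, with Noether--Deuring to descend from $\overline{\bbq}_l$ to $\bbq_l$) identifies the semisimplifications. Both of the cautionary points you flag are exactly the right ones, and your treatment of them is sound.
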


Let $p$ be a prime number and $F$ be a finite extension of $\bbq_p$. Let $\bbc_p$ be a $p$-adic completion of an algebraic closure $\bar{F}$ of $F$. Define $\bbq_p(0)=\bbq_p$, $\bbq_p(1)=(\varprojlim\mu_{p^n})\otimes_{\bbz_p}\bbq_p$, and for $n\geq 1$, $\bbq_p(n)=\bbq_p(1)^{\otimes n}$, $\bbq_p(-n)=\Hom(\bbq_p(n),\bbq_p)$. Moreover, we define $\bbc_p(n)=\bbc_p\otimes_{\bbq_p}\bbq_P(n)$, on which $\Gal(\bar{F}/F)$ acts diagonally. It is known that $(\bbc_p)^{\Gal(\bar{F}/F)}=F$ and $(\bbc_p(n))^{\Gal(\bar{F}/F)}=0$ for $n\neq 0$.

Let $B_{HT}=\oplus_{n\in\bbz}\bbc_p(n)$ be a graded $\bbc_p$-module with an action of $\Gal(\bar{F}/F)$. For a finite dimensional $\Gal(\bar{F}/F)$-representation $V$ over $\bbq_p$, we define a finite dimensional graded $F$-module $D_{HT}(V)$ by $D_{HT}(V)=(V\otimes_{\bbq_p}B_{HT})^{\Gal(\bar{F}/F)}$. The graded module structure of $D_{HT}(V)$ is induced from that of $B_{HT}$. In general, it is known that
\[
\dim_{F} D_{HT}(V)\leq\dim_{\bbq_p}V.
\]
If the equality holds, $V$ is called a \emph{Hodge-Tate representation}.

\begin{thm}\cite{Fal88}\cite{Tsu99}\label{decomposition}(Hodge-Tate decomposition)
Let $X$ be a proper smooth variety over $F$ and $k$ be an integer. The $p$-adic \'etale cohomology $H_{\acute{e}t}^{k}(X_{\bar{F}},\bbq_p)$ of $X_{\bar{F}}=X\otimes_{F}\bar{F}$ is a finite dimensional $\Gal(\bar{F}/F)$-representation over $\bbq_p$. Then, $H_{\acute{e}t}^{k}(X_{\bar{F}},\bbq_p)$  is a Hodge-Tate representation, Moreover, there exists a canonical and functorial isomorphism
\[
\bigoplus_{i+j=k}H^{i}(X,\Omega_{X}^{j})\otimes_{F}\bbc_{p}(-j)\cong H_{\acute{e}t}^{k}(X_{\bar{F}},\bbq_p)\otimes_{\bbq_p}\bbc_p
\]
of $\Gal(\bar{F}/F)$-representations, where $\Gal(\bar{F}/F)$ acts on $H^{i}(X,\Omega_{X}^{j})$ trivially and the right hand side diagonally.
\end{thm}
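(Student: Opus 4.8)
The plan is to prove the statement by realizing $p$-adic \'etale cohomology, after extending scalars to $\bbc_p$, as the cohomology of the completed structure sheaf on the pro-\'etale site, and then comparing the latter with coherent (Hodge) cohomology. This is the architecture of Scholze's treatment, which reworks the original arguments of Faltings (via the almost purity theorem) and of Tsuji (via syntomic cohomology and the Fontaine--Messing complex); all three produce the same comparison. Since the assertion is \'etale-local and, by GAGA, unaffected by passage to the analytification, one may replace the proper smooth $F$-scheme $X$ by its associated rigid/adic space and work over $\bbc_p$ while tracking the $\Gal(\bar F/F)$-action. The finite dimensionality of $H^k_{\acute{e}t}(X_{\bar F},\bbq_p)$ and its continuity as a $\Gal(\bar F/F)$-representation are standard consequences of properness and smoothness together with an integral model, and require no $p$-adic Hodge theory. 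The content is the comparison isomorphism, whose first input is the primitive comparison theorem, a canonical $\Gal(\bar F/F)$-equivariant isomorphism
\[
H^k_{\acute{e}t}(X_{\bar F},\bbq_p)\otimes_{\bbq_p}\bbc_p\;\cong\; H^k\!\left(X_{\mathrm{pro\acute{e}t}},\widehat{\mathcal O}_X\right),
\]
proved by an almost-mathematics argument: working modulo $p$ on an integral model and invoking Faltings' almost purity, one identifies $H^k(X_{\acute{e}t},\bbz_p)\otimes\mathcal O_{\bbc_p}/p$ with $H^k(X,\mathcal O^+_X/p)$ up to an almost isomorphism with bounded torsion, then passes to the limit and inverts $p$.

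The second input is the computation of the pushforward along $\nu\colon X_{\mathrm{pro\acute{e}t}}\to X_{\acute{e}t}$. Using affinoid perfectoid covers and the Faltings extension (a perfectoid Poincar\'e lemma), one obtains a canonical isomorphism $R^j\nu_*\widehat{\mathcal O}_X\cong\Omega^j_X(-j)$ on $X_{\acute{e}t}$. Feeding this into the Cartan--Leray spectral sequence for $\nu$ produces the Hodge--Tate spectral sequence
\[
E_2^{i,j}=H^i\!\left(X,\Omega^j_X\right)(-j)\;\Longrightarrow\; H^{i+j}_{\acute{e}t}(X_{\bar F},\bbq_p)\otimes_{\bbq_p}\bbc_p,
\]
all of whose terms and differentials are $\Gal(\bar F/F)$-equivariant.

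It remains to degenerate and split this spectral sequence, which is where Tate's local Galois cohomology enters decisively. As a Galois module, $E_2^{i,j}$ is an $F$-vector space tensored with $\bbc_p(-j)$, hence pure of Hodge--Tate weight $j$; a differential $d_r$ shifts this weight by one, while $H^0(\Gal(\bar F/F),\bbc_p(n))=0$ for $n\neq 0$ forces every equivariant map between twists of distinct weights to vanish. Thus the sequence degenerates at $E_2$, giving a $\Gal(\bar F/F)$-stable filtration on $H^k_{\acute{e}t}(X_{\bar F},\bbq_p)\otimes\bbc_p$ with graded pieces $H^i(X,\Omega^j_X)(-j)$, $i+j=k$. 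The vanishing of $H^1(\Gal(\bar F/F),\bbc_p(n))$ for $n\neq 0$ (again Tate) shows there are no nontrivial extensions between distinct weight pieces, so the filtration splits canonically, yielding the asserted decomposition, with functoriality inherited from the canonicity of each step. Writing $h^{i,j}:=\dim_F H^i(X,\Omega^j_X)$, we get $V\otimes_{\bbq_p}\bbc_p\cong\bigoplus_{i+j=k}\bbc_p(-j)^{\oplus h^{i,j}}$, whence $\dim_F D_{HT}(V)=\sum_{i+j=k}h^{i,j}=\dim_{\bbq_p}V$ and $V$ is Hodge--Tate.

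The main obstacle is the pair consisting of the primitive comparison and the local pushforward computation: both rest on Faltings' almost purity theorem (equivalently, on perfectoid theory and the tilting equivalence), which is the genuinely hard analytic heart of the subject. By contrast, the degeneration and canonical splitting, though essential, are formal once Tate's cohomology vanishing is in hand, and the reduction to the rigid-analytic setting over $\bbc_p$ is routine.
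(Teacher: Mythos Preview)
The paper does not prove this theorem; it is stated as a cited result, attributed to Faltings \cite{Fal88} and Tsuji \cite{Tsu99}, and used as a black box in what follows. Your sketch is therefore not in competition with any argument actually appearing in the paper.

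That said, your outline is a correct summary of Scholze's perfectoid proof of the Hodge--Tate decomposition, which is one of the standard modern routes. The original arguments proceed differently: Faltings works via his almost purity theorem and almost \'etale extensions over an integral model, while Tsuji deduces the Hodge--Tate statement from the stronger semistable comparison $C_{\mathrm{st}}$ through syntomic and crystalline cohomology. Your pro-\'etale approach is more streamlined for the Hodge--Tate statement in isolation, at the cost of importing the perfectoid machinery, which postdates both cited references.

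One small imprecision: the differential $d_r\colon E_r^{i,j}\to E_r^{i+r,j-r+1}$ shifts the Hodge--Tate weight by $r-1$, not by one; for $r\ge 2$ this is still nonzero, so Tate's vanishing $H^0(\Gal(\bar F/F),\bbc_p(n))=0$ for $n\neq 0$ still kills every $d_r$ and the degeneration argument goes through unchanged.
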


Now for a finite dimensional $\Gal(\bar{F}/F)$-representation $V$ over $\bbq_p$, suppose it is also a $G$-representation such that the $G$-action commutes with the $\Gal(\bar{F}/F)$-action. In this case, we call it a $\Gal(\bar{F}/F)$-$G$-representation and we define a $G$-representation over $F$:
\[
[h^{n}(V)]:=(V\otimes_{\bbq_p}\bbc_{p}(n))^{\Gal(\bar{F}/F)}.
\]
\begin{lemma}\label{hodgetatelemma}
Let $W_2$ be a Hodge-Tate $\Gal(\bar{F}/F)$-$G$-representation and
\[
0\to W_1\to W_2\to W_3\to 0
\]
be an exact sequence of finite dimensional $\Gal(\bar{F}/F)$-$G$-representations over $\bbq_p$. Then $W_1$ and $W_3$ are Hodge-Tate representations and
\[
[h^{n}(W_2)]=[h^{n}(W_1)]\oplus[h^{n}(W_3)]=[h^{n}(W_{1}\oplus W_{3})]
\]
as $G$-representations for all $n$.
\end{lemma}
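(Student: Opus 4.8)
The plan is to combine the left-exactness of the Galois-invariants functor with a dimension-counting squeeze, and then to split the resulting graded sequences using the finiteness of $G$. First I would note that for each $n$ the functor $-\otimes_{\bbq_p}\bbc_p(n)$ is exact, since it is tensoring with a $\bbq_p$-vector space, so applying it to the given sequence yields a short exact sequence
\[
0\to W_1\otimes_{\bbq_p}\bbc_p(n)\to W_2\otimes_{\bbq_p}\bbc_p(n)\to W_3\otimes_{\bbq_p}\bbc_p(n)\to 0
\]
of $\Gal(\bar{F}/F)$-$G$-representations. Taking $\Gal(\bar{F}/F)$-invariants is only left exact, so I obtain an exact sequence of $F$-vector spaces
\[
0\to [h^{n}(W_1)]\to [h^{n}(W_2)]\to [h^{n}(W_3)],
\]
which gives $\dim_F[h^{n}(W_2)]\leq\dim_F[h^{n}(W_1)]+\dim_F[h^{n}(W_3)]$ for every $n$.

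Next I would sum these inequalities over all $n$. Since $D_{HT}(V)=\bigoplus_n[h^{n}(V)]$, summing gives $\dim_F D_{HT}(W_2)\leq \dim_F D_{HT}(W_1)+\dim_F D_{HT}(W_3)$. Now I would invoke the general inequality $\dim_F D_{HT}(W_i)\leq\dim_{\bbq_p}W_i$ for $i=1,3$, the additivity $\dim_{\bbq_p}W_2=\dim_{\bbq_p}W_1+\dim_{\bbq_p}W_3$ coming from exactness of the original sequence, and the hypothesis that $W_2$ is Hodge-Tate, i.e. $\dim_F D_{HT}(W_2)=\dim_{\bbq_p}W_2$. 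Chaining these,
\[
\dim_{\bbq_p}W_1+\dim_{\bbq_p}W_3=\dim_F D_{HT}(W_2)\leq\dim_F D_{HT}(W_1)+\dim_F D_{HT}(W_3)\leq\dim_{\bbq_p}W_1+\dim_{\bbq_p}W_3,
\]
so every inequality must be an equality. This forces $\dim_F D_{HT}(W_1)=\dim_{\bbq_p}W_1$ and $\dim_F D_{HT}(W_3)=\dim_{\bbq_p}W_3$, proving that $W_1$ and $W_3$ are Hodge-Tate. Moreover, because each $n$ contributes an inequality of the same sign to the aggregate sum, equality of the totals forces $\dim_F[h^{n}(W_2)]=\dim_F[h^{n}(W_1)]+\dim_F[h^{n}(W_3)]$ for \emph{every} $n$, which upgrades the left-exact sequence above to a short exact sequence
\[
0\to [h^{n}(W_1)]\to [h^{n}(W_2)]\to [h^{n}(W_3)]\to 0.
\]

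Finally, since the $G$-action on each $W_i$ commutes with the Galois action and is trivial on $\bbc_p(n)$, it preserves the invariants, so the displayed sequences are short exact sequences of finite-dimensional $F[G]$-modules with $G$-equivariant maps. Because $G$ is finite and $F$ has characteristic zero, Maschke's theorem makes $F[G]$ semisimple, so each sequence splits $G$-equivariantly, giving $[h^{n}(W_2)]\cong[h^{n}(W_1)]\oplus[h^{n}(W_3)]$ as $G$-representations. The identification $[h^{n}(W_1\oplus W_3)]=[h^{n}(W_1)]\oplus[h^{n}(W_3)]$ is immediate from the definition, completing the argument. The only step that is not formal bookkeeping is the dimension squeeze: the Hodge-Tate hypothesis on $W_2$ together with the a priori inequality is precisely what pins every inequality down to an equality, and the passage to a $G$-equivariant splitting rests on the finiteness of $G$ rather than on anything from $p$-adic Hodge theory.
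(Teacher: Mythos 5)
Your argument is correct and is essentially the argument underlying the paper's proof: the paper simply cites \cite[Lemma 4.4]{It03} for the fact that $W_1$ and $W_3$ are Hodge--Tate and that $0\to D_{HT}(W_1)\to D_{HT}(W_2)\to D_{HT}(W_3)\to 0$ is exact, and your dimension-squeeze via $\dim_F D_{HT}(V)\leq\dim_{\bbq_p}V$ is precisely the standard proof of that cited lemma. Your explicit appeal to Maschke's theorem to split the resulting short exact sequences $G$-equivariantly is a point the paper leaves implicit, so your write-up is, if anything, more complete.
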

\begin{proof}
It follows from \cite[Lemma 4.4]{It03} that $W_1$ and $W_3$ are Hodge-Tate representations and we have the following short exact sequence of $G$-representations
\[
0\to D_{HT}(W_1)\to D_{HT}(W_2) \to D_{HT}(W_3) \to 0,
\]
which implies that 
\[
[h^{n}(W_2)]=[h^{n}(W_1)]\oplus[h^{n}(W_3)]=[h^{n}(W_{1}\oplus W_{3})].
\]
\end{proof}

\begin{cor}\label{hodgenumber}
Let $X$ be a proper smooth variety over $F$ with a $G$-action. Then 
\[
H^{i}(X,\Omega_{X}^{j})=[h^{j}(H^{i+j}(X_{\bar{F}},\bbq_p)^{ss})]\ \text{as }G\text{-representations for all}\ i,j,
\]
where $H^{i+j}(X_{\bar{F}},\bbq_p)^{ss}$ denotes the semisimplification of $H^{i+j}(X_{\bar{F}},\bbq_p)$ as a $\Gal(\bar{F}/F)$-representation.
\end{cor}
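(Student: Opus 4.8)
The plan is to combine the Hodge--Tate decomposition of Theorem \ref{decomposition} with the additivity of the functor $[h^{n}(-)]$ recorded in Lemma \ref{hodgetatelemma}. Write $k=i+j$ and set $V=H^{k}_{\acute{e}t}(X_{\bar F},\bbq_p)$. Since $G$ acts on $X$ by automorphisms defined over $F$, its action on $V$ commutes with the $\Gal(\bar F/F)$-action, so $V$ is a $\Gal(\bar F/F)$-$G$-representation; likewise each $H^{i'}(X,\Omega_X^{j'})$ carries a $G$-action with $\Gal(\bar F/F)$ acting trivially. The first point is that the canonical isomorphism of Theorem \ref{decomposition} is not merely $\Gal(\bar F/F)$-equivariant but $G$-equivariant: because the decomposition is functorial in $X$, each $g\in G$ fits into a commuting square, so
\[
\bigoplus_{i'+j'=k}H^{i'}(X,\Omega_X^{j'})\otimes_F\bbc_p(-j')\cong V\otimes_{\bbq_p}\bbc_p
\]
holds as $\Gal(\bar F/F)$-$G$-representations.

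Next I would compute $[h^{j}(V)]$ directly. Identifying $V\otimes_{\bbq_p}\bbc_p(j)\cong(V\otimes_{\bbq_p}\bbc_p)\otimes_{\bbc_p}\bbc_p(j)$, tensoring the displayed isomorphism over $\bbc_p$ with $\bbc_p(j)$, and using $\bbc_p(-j')\otimes_{\bbc_p}\bbc_p(j)\cong\bbc_p(j-j')$ gives
\[
V\otimes_{\bbq_p}\bbc_p(j)\cong\bigoplus_{i'+j'=k}H^{i'}(X,\Omega_X^{j'})\otimes_F\bbc_p(j-j')
\]
as $\Gal(\bar F/F)$-$G$-representations. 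Taking $\Gal(\bar F/F)$-invariants commutes with the finite direct sum, and since $G$ acts trivially on the twists $\bbc_p(j-j')$ while $\Gal(\bar F/F)$ acts trivially on the Hodge pieces, each summand contributes $H^{i'}(X,\Omega_X^{j'})\otimes_F(\bbc_p(j-j'))^{\Gal(\bar F/F)}$. Using $(\bbc_p(m))^{\Gal(\bar F/F)}=F$ for $m=0$ and $0$ otherwise, only the term with $j'=j$ (hence $i'=i$) survives, yielding $[h^{j}(V)]=H^{i}(X,\Omega_X^{j})$ as $G$-representations.

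It remains to pass from $V$ to its semisimplification. Choosing a composition series of $V$ as a $\Gal(\bar F/F)$-$G$-representation and applying Lemma \ref{hodgetatelemma} repeatedly to the successive short exact sequences, every composition factor is Hodge--Tate and $[h^{j}(-)]$ is additive along the series; summing, I obtain $[h^{j}(V)]=[h^{j}(V^{ss})]$, where $V^{ss}$ is the direct sum of the composition factors. Combining with the previous paragraph gives $H^{i}(X,\Omega_X^{j})=[h^{j}(V^{ss})]$, as claimed. I expect the only genuinely delicate point to be the first one, namely verifying that the Hodge--Tate comparison isomorphism is compatible with the $G$-action; this reduces to the functoriality assertion in Theorem \ref{decomposition} applied to each automorphism $g\in G$, and once this $G$-equivariance is secured the invariants computation and the semisimplification step are formal.
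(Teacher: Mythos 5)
Your proposal is correct and follows essentially the same route as the paper: apply the Hodge--Tate decomposition of Theorem \ref{decomposition}, tensor with $\bbc_p(j)$ and take $\Gal(\bar F/F)$-invariants so that only the $(i,j)$ summand survives, then use Lemma \ref{hodgetatelemma} (iterated along a composition series) to replace $H^{i+j}(X_{\bar F},\bbq_p)$ by its semisimplification. Your explicit verification of the $G$-equivariance of the comparison isomorphism via functoriality is a detail the paper leaves implicit, but it is the right justification.
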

\begin{proof}
By theorem \ref{decomposition}, if we take the $\Gal(\bar{F}/F)$-invariant of $H^{i+j}(X_{\bar{F}},\bbq_p)\otimes_{\bbq_p}\bbc_{p}(j)$, we have
\[
H^{i}(X,\Omega_{X}^{j})=[h^{j}(H^{i+j}(X_{\bar{F}},\bbq_p))].
\]
On the other hand, since $H^{i+j}(X_{\bar{F}},\bbq_p)$ is a $\Gal(\bar{F}/F)$-$G$ Hodge-Tate representation,
\[
[h^{j}(H^{i+j}(X_{\bar{F}},\bbq_p))]=[h^{j}(H^{i+j}(X_{\bar{F}},\bbq_p)^{ss})]
\]
by lemma \ref{hodgetatelemma}. Hence we are done. 
\end{proof}

\begin{thm}\label{padichodge}
Let $\mathcal{X}$ and $\mathcal{Y}$ be $n$-dimensional smooth projective varieties over a number field $K$ with $G$-actions. Suppose for all but finitely many good reductions, we have
\[
|{X}(\overline{\mathbb{F}}_{p})^{gF_{q^n}}|=|{Y}(\overline{\mathbb{F}}_{p})^{gF_{q^n}}|\text{ for every }n\geq 1\text{ and }g\in G,
\]
where $X,Y$ are the good reductions over $\bbf_q$. Then 
\[
H^{p,q}(\mathcal{X}_{\bbc})\cong H^{p,q}(\mathcal{Y}_{\bbc}). 
\]
for all $p,q$ as $G$-representations.
\end{thm}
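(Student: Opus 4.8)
The plan is to promote the hypothesis on twisted point counts into an isomorphism of geometric $p$-adic étale cohomology groups as joint $\Gal(\bar K/K)\times G$-representations, up to semisimplification, and then to extract the Hodge pieces using Corollary \ref{hodgenumber}. Fix a rational prime $p$ and write $V_X^i=H^i(\mathcal{X}_{\bar K},\bbq_p)$ and $V_Y^i=H^i(\mathcal{Y}_{\bar K},\bbq_p)$. Because the $G$-action is defined over $K$, it commutes with the Galois action, so each $V^i$ is a $\Gal(\bar K/K)\times G$-representation, unramified outside a finite set $S$. For every good place $\mathfrak{q}$ of $K$ whose residue characteristic $\ell$ is different from $p$, smooth--proper base change identifies $V_X^i$ with the $\bbq_p$-cohomology of the reduction $X$, compatibly with $G$ and with the identification of the geometric Frobenius $F_q$ with $Frob_{\mathfrak{q}}$; the Grothendieck--Lefschetz formula then rewrites the hypothesis as the equality of alternating traces $\sum_i(-1)^i\Tr(g\,Frob_{\mathfrak{q}}^n,V_X^i)=\sum_i(-1)^i\Tr(g\,Frob_{\mathfrak{q}}^n,V_Y^i)$ for all $g\in G$, all $n\geq 1$, and almost all such $\mathfrak{q}$.

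First I would isolate each cohomological degree and each $G$-isotypic component. Decomposing $V^i=\bigoplus_\pi (V^i)_\pi\otimes\pi$ over the irreducible $G$-representations $\pi$ (working over a finite extension of $\bbq_p$ that splits $G$) and using linear independence of the characters of $G$ as $g$ varies converts the displayed equality into $\sum_i(-1)^i\Tr(Frob_{\mathfrak{q}}^n,(V_X^i)_\pi)=\sum_i(-1)^i\Tr(Frob_{\mathfrak{q}}^n,(V_Y^i)_\pi)$ for each $\pi$. Since $(V^i)_\pi$ is a $\Gal(\bar K/K)$-stable subquotient of $V^i$, its Frobenius eigenvalues are pure of weight $i$, of absolute value $q^{i/2}$; grouping eigenvalues by absolute value and using linear independence of $n\mapsto\mu^n$—exactly the mechanism of Lemma \ref{Weil}—separates the degrees and yields $\Tr(Frob_{\mathfrak{q}},(V_X^i)_\pi)=\Tr(Frob_{\mathfrak{q}},(V_Y^i)_\pi)$ for each $i,\pi$ and almost all $\mathfrak{q}$. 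Proposition \ref{galoisrep} then gives $(V_X^i)_\pi^{ss}\cong(V_Y^i)_\pi^{ss}$ as $\Gal(\bar K/K)$-representations, and reassembling over $\pi$ shows $(V_X^i)^{ss}\cong(V_Y^i)^{ss}$ as $\Gal(\bar K/K)\times G$-representations for every $i$.

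Next I would localize at a place $\mathfrak{q}_0\mid p$ with completion $F=K_{\mathfrak{q}_0}$ and restrict along the decomposition group $\Gal(\bar F/F)\hookrightarrow\Gal(\bar K/K)$. Restriction is exact, hence preserves classes in the Grothendieck group, so $H^i(\mathcal{X}_{\bar F},\bbq_p)^{ss}\cong H^i(\mathcal{Y}_{\bar F},\bbq_p)^{ss}$ as $\Gal(\bar F/F)\times G$-representations; no good reduction at $\mathfrak{q}_0$ is required here, only smoothness and properness of $\mathcal{X}_F$. Applying Corollary \ref{hodgenumber}, whose output $[h^j(\,\cdot\,^{ss})]$ depends only on the semisimple $\Gal(\bar F/F)\times G$-representation through the additive functor of Lemma \ref{hodgetatelemma}, yields $H^i(\mathcal{X}_F,\Omega^j)\cong H^i(\mathcal{Y}_F,\Omega^j)$ as $G$-representations over $F$ for all $i,j$.

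Finally, Hodge cohomology commutes with the flat base changes $K\to F$ and $K\to\bbc$, so $H^i(\mathcal{X}_F,\Omega^j)=H^i(\mathcal{X}_K,\Omega^j)\otimes_K F$ and $H^{p,q}(\mathcal{X}_{\bbc})=H^q(\mathcal{X}_K,\Omega^p)\otimes_K\bbc$. As $G$ is a finite group and the characteristic is zero, a $G$-representation over $K$ is determined up to isomorphism by its $K$-valued character, so an isomorphism after $\otimes_K F$ forces an isomorphism of the $K$-representations, and hence after $\otimes_K\bbc$, giving $H^{p,q}(\mathcal{X}_{\bbc})\cong H^{p,q}(\mathcal{Y}_{\bbc})$ as $G$-representations (taking $i=q$, $j=p$). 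The hard part will be the equivariant Chebotarev step of the second paragraph: one must simultaneously decouple the $G$-isotypic pieces via character orthogonality, separate the cohomological degrees by Frobenius weight as in Lemma \ref{Weil}, and control ramification so that Proposition \ref{galoisrep} applies to each multiplicity space. Keeping the two commuting symmetries compatible with the weight filtration is where the care lies, whereas the localization at $p$ and the descent from $F$ back to $K$ and up to $\bbc$ are routine.
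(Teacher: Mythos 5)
Your proposal is correct and follows essentially the same route as the paper: pass from twisted point counts to trace identities, separate cohomological degrees by Frobenius weights as in Lemma \ref{Weil}, apply Proposition \ref{galoisrep} to get equal semisimplifications as $\Gal(\bar K/K)$-$G$-representations, restrict to a decomposition group at $p$, and invoke Corollary \ref{hodgenumber}. The only difference is expository: you make the equivariant step explicit by decomposing into $G$-isotypic multiplicity spaces before applying Serre's lemma, a detail the paper compresses into the phrase ``by the proof of Lemma \ref{Weil} and Proposition \ref{galoisrep}.''
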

\begin{proof}
By the proof of Lemma \ref{Weil} and Proposition \ref{galoisrep}, we deduce that $H^{i}(\mathcal{X}_{\bar{K}},\bbq_l)$ and $H^{i}(\mathcal{Y}_{\bar{K}},\bbq_l)$ have the same semisimplifications as $\Gal(\bar{K}/K)$-$G$-representations.

Now take a maximal ideal $\mathfrak{q}$ of $\mathcal{O}_K$ dividing $l$. Let $F$ be the completion of $K$ at $\mathfrak{q}$. Fix an embedding $\bar{K}\hookrightarrow\bar{F}$. Then we have an inclusion $\Gal(\bar{F}/F)\subset\Gal(\bar{K}/K)$. Therefore, $H^{i}(\mathcal{X}_{\bar{F}},\bbq_l)$ and $H^{i}(\mathcal{Y}_{\bar{F}},\bbq_l)$ have the same semisimplifications as $\Gal(\bar{F}/F)$-$G$-representations. By Corollary \ref{hodgenumber}, we conclude that 
\[
H^{q}(\mathcal{X}_{\bbc},\Omega_{\mathcal{X}_{\bbc}}^{p})\cong H^{q}(\mathcal{Y}_{\bbc},\Omega_{\mathcal{Y}_{\bbc}}^{p})
\]
for all $p,q$ as $G$-representations.
\end{proof}

\section{Hilbert scheme of points}

We denote by $X^{[n]}$ the component of the Hilbert scheme of a projective scheme $X$ parametrizing subschemes of length $n$ of $X$. For properties of Hilbert scheme of points, see references \cite{Iar77}, \cite{Go94} and \cite{Nak99}. 

\begin{lemma}\label{hilbert scheme}
Let $S$ be a smooth projective surface with a G-action over $\bbf_{q}$. Suppose $g\in G$ and let $F_{q}$ be the geometric Frobenius. Then
\[
\sum_{n=0}^{\infty}|S^{[n]}(\overline{\bbf}_{q})^{gF_{q}}|t^{n}=\prod_{r=1}^{\infty}\left(\sum_{n=0}^{\infty}|{\rm Hilb}^{n}(\widehat{\mathcal{O}_{S_{\overline{\bbf}_{q}},x}})(\overline{\bbf}_{q})^{g^{r}F_{q}^{r}}|t^{nr}\right)^{|P_{r}(S,gF_{q})|},
\]
where ${\rm Hilb}^{n}(\widehat{\mathcal{O}_{S_{\overline{\bbf}_{q}},x}})$ is the punctual Hilbert scheme of $n$ points at some $g^{r}F_{q}^{r}$-fixed point $x\in S(\overline{\bbf}_{q})$, and $P_{r}(S,gF_{q})$ is the set of primitive 0-cycles of degree $r$ of $gF_{q}$ on $S$, whose elements are of the form $\sum_{i=0}^{r-1}g^{i}F_{q}^{i}(x)$ with $x\in S(\overline{\bbf}_{q})^{g^{r}F_{q}^{r}}\backslash(\cup_{j<r}S(\overline{\bbf}_{q})^{g^{j}F_{q}^{j}})$.
\end{lemma}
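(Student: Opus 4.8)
The plan is to adapt G\"ottsche's stratification-by-support argument to the twisted Frobenius $gF_q$. First I would note that a point of $S^{[n]}(\overline{\bbf}_q)$ fixed by $gF_q$ is exactly a length-$n$ closed subscheme $Z\subset S_{\overline{\bbf}_q}$ with $(gF_q)(Z)=Z$, and that any finite subscheme splits canonically as the disjoint union $Z=\coprod_{x}Z_x$ over its support, where $Z_x$ is the component supported at the closed point $x$ and $\mathrm{length}(Z)=\sum_{x}\mathrm{length}(Z_x)$. Since $gF_q$ sends the component supported at $x$ to a component supported at $gF_q(x)$, invariance of $Z$ forces its support to be a union of $gF_q$-orbits in $S(\overline{\bbf}_q)$, and it forces the local pieces along each orbit to be carried into one another by $gF_q$.

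Next I would analyze a single orbit. An orbit of size $r$ consists of a point $x$ together with $gF_q(x),\ldots,(gF_q)^{r-1}(x)$, where $r$ is minimal with $(gF_q)^{r}(x)=x$; since $g$ commutes with $F_q$ this means $x\in S(\overline{\bbf}_q)^{g^{r}F_q^{r}}$ and $x\notin S(\overline{\bbf}_q)^{g^{j}F_q^{j}}$ for $j<r$, i.e.\ the orbit is a primitive $0$-cycle of degree $r$ counted by $P_r(S,gF_q)$. A $gF_q$-invariant subscheme supported on this orbit is determined by its local piece $W$ at the representative $x$: the remaining pieces are $(gF_q)(W),\ldots,(gF_q)^{r-1}(W)$, and compatibility at the end of the cycle requires precisely $(gF_q)^{r}(W)=W$, that is, $W$ is a fixed point of $g^{r}F_q^{r}$ on the punctual Hilbert scheme $\mathrm{Hilb}^{m}(\widehat{\mathcal{O}_{S_{\overline{\bbf}_q},x}})$, where $m=\mathrm{length}(W)$. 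As the $r$ pieces are mutually isomorphic translates of $W$, such an orbit contributes total length $rm$, so its generating function is $\sum_{m\geq 0}|\mathrm{Hilb}^{m}(\widehat{\mathcal{O}_{S_{\overline{\bbf}_q},x}})(\overline{\bbf}_q)^{g^{r}F_q^{r}}|\,t^{rm}$.

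The argument then closes by multiplicativity: an invariant length-$n$ subscheme is the same datum as an independent choice of invariant local piece on each orbit, with total length additive across orbits, so the full generating function factors as the product over all orbits of their individual generating functions. Grouping the orbits by their common size $r$ gives the exponent $|P_r(S,gF_q)|$, which yields the stated formula.

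The step I expect to be the main obstacle is justifying that the local count $|\mathrm{Hilb}^{m}(\widehat{\mathcal{O}_{S_{\overline{\bbf}_q},x}})(\overline{\bbf}_q)^{g^{r}F_q^{r}}|$ is independent of which $g^{r}F_q^{r}$-fixed point $x$ we pick, so that it can be pulled out of the product over orbits of size $r$. The resolution is to view $g^{r}F_q^{r}$ as the geometric Frobenius for a twisted $\bbf_{q^{r}}$-structure on $S$, following the discussion of $F_q^{n}\sigma$ at the start of the Preliminaries: with respect to this structure $x$ is an $\bbf_{q^{r}}$-rational point of the smooth surface, so $\widehat{\mathcal{O}_{S_{\overline{\bbf}_q},x}}\cong \bbf_{q^{r}}[[u,v]]$ with $g^{r}F_q^{r}$ acting as the standard $q^{r}$-power Frobenius in suitable coordinates. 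Hence the punctual Hilbert scheme and its fixed-point count depend only on $r$, agreeing with $|\mathrm{Hilb}^{m}(\mathbb{A}^{2}_{\bbf_{q^{r}}})_{0}(\bbf_{q^{r}})|$. One must also take care that the disjoint-union decomposition genuinely yields a bijection between invariant subschemes and tuples of invariant local data, which is where the formal properties of the punctual Hilbert scheme enter.
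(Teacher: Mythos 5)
Your proposal follows essentially the same route as the paper: the same stratification of a $gF_{q}$-invariant length-$n$ subscheme by its support into $gF_{q}$-orbits, with each size-$r$ orbit determined by a local piece at a chosen representative that is fixed by $g^{r}F_{q}^{r}$, assembled multiplicatively into the generating function. The only point of divergence is the independence of the local count from the choice of fixed point, which you justify via the twisted $\bbf_{q^{r}}$-structure for which $g^{r}F_{q}^{r}$ is the geometric Frobenius (a legitimate shortcut), whereas the paper isolates this as the separate key Lemma~\ref{fixpoint} and proves it by linearizing the $g$-action on the completed local ring and invoking the Deligne--Lusztig fact that a connected algebraic group acts trivially on compactly supported cohomology.
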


\begin{proof}
Let $Z\in S^{[n]}(\overline{\bbf}_{q})^{gF_{q}}$. Suppose $(n_{1},...,n_{r})$ is a partition of $n$ and $Z=(Z_{1},...,Z_{r})$ with $Z_{i}$ being the closed subscheme of $Z$ supported at a single point with length $n_{i}$. Then Supp$Z$ decomposes into $gF_{q}$ orbits. We can choose an ordering $\leq$ on $S(\overline{\bbf}_{q})$. In each orbit, we can find the smallest $x_{j}\in S(\overline{\bbf}_{q})$. Suppose $Z_{j}$ with length $l$ is supported on $x_{j}$ and $x_{j}$ has order $k$. Then the component of $Z$ which is supported on the orbit of $x_{j}$ is determined by $Z_{j}$, namely, it is $\cup_{i=0}^{k-1}g^{i}F_{q}^{i}(Z_{j})$ with length $kl$. Also notice that $Z_{j}$ is fixed by $g^{k}F_{q}^{k}$. Hence, to give an element of $S^{[n]}(\overline{\bbf}_{q})^{gF_{q}}$ is the same as choosing some $gF_{q}$ orbits and for each orbit choosing some element in ${\rm Hilb}^{n}(\widehat{\mathcal{O}_{S_{\overline{\bbf}_{q}},x}})(\overline{\bbf}_{q})^{g^{k}F_{q}^{k}}$ for some $g^{k}F_{q}^{k}$-fixed point $x$ in this orbit such that the final length altogether is $n$. Combining all of these into power series, we get the desired equality. 
\end{proof}

The idea we used above is explained in detail in \cite[lemma 2.7]{Go90}. We implicitly used the fact that $\pi:(S^{[n]}_{(n)})_{red}\rightarrow S$ is a locally trivial fiber bundle in the Zariski topology with fiber ${\rm Hilb}^{n}(\mathbb{F}_{q}[[s,t]])_{red}$ \cite[Lemma 2.1.4]{Go94}, where $S^{[n]}_{(n)}$ parametrizes closed subschemes of length $n$ that are supported on a single point. 

We need the following key lemma.

\begin{lemma}\label{fixpoint}
Let $S$ be a smooth projective surface with a G-action over $\bbf_{q}$. If $x\in S(\overline{\bbf}_{q})^{gF_{q}}$, where $g\in G$ and $F_{q}$ is the geometric Frobenius, then
\[
|{\rm Hilb}^{n}(\widehat{\mathcal{O}_{S_{\overline{\bbf}_{q}},x}})(\overline{\bbf}_{q})^{gF_{q}}|=|{\rm Hilb}^{n}(\mathbb{F}_{q}[[s,t]])(\overline{\bbf}_{q})^{F_{q}}|.
\]
\end{lemma}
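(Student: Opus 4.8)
The plan is to interpret the operator $gF_{q}$ at the fixed point $x$ as a genuine geometric Frobenius for a twisted $\bbf_{q}$-structure, so that the left-hand count becomes a count of $\bbf_{q}$-rational points of a standard punctual Hilbert scheme. Since the $G$-action is defined over $\bbf_{q}$, the automorphism $g$ commutes with $F_{q}$ and has finite order; as recalled at the beginning of this section (\cite[Prop.3.3]{DL76}, \cite[Appendix(h)]{Ca85}), the composite $\Phi:=gF_{q}$ is then the geometric Frobenius attached to a new way of lowering the field of definition of $S$ from $\overline{\bbf}_{q}$ to $\bbf_{q}$. Under this new structure, the hypothesis $x\in S(\overline{\bbf}_{q})^{gF_{q}}$ says exactly that $x$ is an $\bbf_{q}$-rational point, and the punctual scheme $\Hilb^{n}(\widehat{\mathcal{O}_{S_{\overline{\bbf}_{q}},x}})$ inherits the corresponding Frobenius, whose fixed points are by definition the objects we are counting.

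First I would pass to the finite-dimensional truncation $A:=\widehat{\mathcal{O}_{S_{\overline{\bbf}_{q}},x}}/\mathfrak{m}_{x}^{n}$, where $\mathfrak{m}_{x}$ is the maximal ideal: any length-$n$ subscheme supported at $x$ is annihilated by $\mathfrak{m}_{x}^{n}$, so $\Hilb^{n}(\widehat{\mathcal{O}_{S_{\overline{\bbf}_{q}},x}})$ depends only on $A$ and is a genuine projective $\overline{\bbf}_{q}$-variety whose $\overline{\bbf}_{q}$-points are the colength-$n$ ideals of $A$. On this set the operator $\Phi$ acts as a $\sigma$-semilinear ring automorphism, $\sigma$ being the $q$-power Frobenius of $\overline{\bbf}_{q}$, and an ideal is $\Phi$-fixed precisely when it is rational for the descended $\bbf_{q}$-structure.

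Next I would descend. By Galois descent for finite-dimensional algebras (effectivity via Hilbert~90), the $\sigma$-semilinear automorphism $\Phi$ of $A$ yields an $\bbf_{q}$-algebra $A_{0}$ with $A_{0}\otimes_{\bbf_{q}}\overline{\bbf}_{q}\cong A$ and $\Phi=\mathrm{id}\otimes\sigma$, under which the $\Phi$-fixed ideals of $A$ correspond bijectively to the colength-$n$ ideals of $A_{0}$ supported at the closed point. Because $x$ is fixed by $\Phi$ itself, $\sigma$ acts on the residue field with fixed field $\bbf_{q}$, so $A_{0}$ is a local $\bbf_{q}$-algebra with residue field $\bbf_{q}$; and since $A_{0}\otimes\overline{\bbf}_{q}=A$ is the length-$n$ truncation of a two-dimensional regular complete local ring, $A_{0}$ is itself such a truncation. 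By Cohen's structure theorem $A_{0}\cong\bbf_{q}[[s,t]]/(s,t)^{n}$, so the form is automatically split. Running the identical truncation on the right-hand side identifies the $\Phi$-fixed ideals with the $F_{q}$-fixed, i.e. $\bbf_{q}$-rational, ideals of $\bbf_{q}[[s,t]]$, which is the desired equality.

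The main obstacle is the descent step: one must verify that $gF_{q}$ really acts as a Frobenius-semilinear automorphism on the set of ideals — and not merely as the $\overline{\bbf}_{q}$-linear endomorphism $F_{q}$ induces on functions — so that its fixed locus is finite and genuinely computes $\bbf_{q}$-points. Pinning down this incarnation of the Frobenius, equivalently checking that $\Phi$ defines an effective $\bbf_{q}$-form whose residue field is exactly $\bbf_{q}$, is the crux; once that is in place, Cohen's structure theorem trivializes the form and the comparison with $\bbf_{q}[[s,t]]$ is immediate.
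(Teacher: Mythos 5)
Your proof is correct, but it takes a genuinely different route from the paper's. The paper works with the splitting $\widehat{\mathcal{O}_{S,y}}\otimes_{\bbf_q}\overline{\bbf}_{q}\cong\prod\overline{\bbf}_{q}[[s,t]]$ over the closed point $y$ below $x$, linearizes the $g$-action on the formal coordinates by an averaging argument (this is exactly where the tameness hypothesis $p\nmid|G|$ enters), and then compares the two counts as Lefschetz numbers: since the linearized $\tilde g$ lies in the connected group $GL_2$, it acts trivially on $H^{*}_{c}$ of the punctual Hilbert scheme by \cite[Corollary 6.5]{DL76}, so $\Tr((\tilde g F_q)^{*})=\Tr(F_q^{*})$. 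You instead truncate to the Artinian algebra $A=\widehat{\mathcal{O}}/\mathfrak{m}_x^{n}$, descend the twisted Frobenius $\Phi=gF_q$ along $\overline{\bbf}_{q}/\bbf_q$ (Lang's theorem or Hilbert 90 for the finite subextension over which everything lives) to an $\bbf_q$-form $A_0$, and use Cohen's structure theorem plus a dimension count on associated gradeds to see that $A_0\cong\bbf_q[s,t]/(s,t)^{n}$ is automatically split, so $\Phi$-fixed ideals biject with colength-$n$ ideals of the standard truncation. The point you flag as the crux does check out: the fixed points of the geometric Frobenius on $\overline{\bbf}_{q}$-points agree with those of the arithmetic Frobenius $\mathrm{id}\otimes\sigma$, which is a genuine semilinear ring automorphism of $\widehat{\mathcal{O}_{S,y}}\otimes_{\bbf_q}\overline{\bbf}_{q}$, and composing with the $\overline{\bbf}_{q}$-linear automorphism $g$ stabilizes the factor at $x$ precisely because $x$ is $gF_q$-fixed. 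What each approach buys: the paper's argument plugs directly into the cohomological formalism used throughout the paper, while yours is more elementary (no \'etale cohomology or Deligne--Lusztig input) and, notably, never uses $p\nmid|G|$ --- the lemma is stated without that hypothesis but the paper's linearization step requires it, so your descent argument actually closes that small gap.
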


We will prove this lemma later in this section. 

From Lemma \ref{fixpoint}, we observe that $|{\rm Hilb}^{n}(\widehat{\mathcal{O}_{S_{\overline{\bbf}_{q}},x}})(\overline{\bbf}_{q})^{gF_{q}}|$ is a number independent of the choice of the $gF_{q}$-fixed point $x$.

We denote ${\rm Hilb}^{n}(\mathbb{F}_{q}[[s,t]])$ by $V_{n}$. Combining Lemma \ref{hilbert scheme} and Lemma \ref{fixpoint}, we deduce that
\[
\sum_{n=0}^{\infty}|S^{[n]}(\overline{\bbf}_{q})^{gF_{q}}|t^{n}=\prod_{r=1}^{\infty}\left(\sum_{n=0}^{\infty}|V_{n}(\overline{\bbf}_{q})^{F_{q}^{r}}|t^{nr}\right)^{|P_{r}(S,gF_{q})|}.
\]

Recall the following structure theorem for the punctual Hilbert scheme of points. 

\begin{prop}\cite[Prop 4.2]{ES87}\label{cell}
Let $k$ be an algebraically closed field. Then ${\rm Hilb}^{n}(k[[s,t]])$ over $k$ has a cell decomposition, and the number of $d$-cells is $P(d,n-d)$, where $P(x,y):=\#$$\{\text{partition of x into parts}\leq y\}$.
\end{prop}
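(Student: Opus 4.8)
The final statement to prove is Proposition~\ref{cell}: that $\mathrm{Hilb}^n(k[[s,t]])$ admits a cell decomposition whose number of $d$-cells equals $P(d,n-d)$, the number of partitions of $d$ into parts of size at most $y=n-d$. Since this is cited to \cite[Prop 4.2]{ES87}, my plan is to reconstruct the Ellingsrud–Str{\o}mme argument, which builds the cell decomposition from a torus action on the affine plane and the theory of B\l{}a\l{}ynicki-Birula.

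\textbf{The plan.} First I would introduce the two-dimensional torus $T=(k^{*})^{2}$ acting on $\mathbb{A}^{2}=\Spec k[s,t]$ by scaling the coordinates, $(\lambda,\mu)\cdot(s,t)=(\lambda s,\mu t)$. This induces an action on $\mathrm{Hilb}^{n}(\mathbb{A}^{2})$, and restricting attention to subschemes supported at the origin gives an action on the punctual Hilbert scheme $\mathrm{Hilb}^{n}(k[[s,t]])$. The key observation is that the $T$-fixed points are precisely the monomial ideals of colength $n$, i.e. ideals $I\subset k[[s,t]]$ generated by monomials with $\dim_{k}(k[[s,t]]/I)=n$; these are in bijection with partitions (Young diagrams) of $n$. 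Next I would choose a generic one-parameter subgroup $\gamma\colon k^{*}\to T$, $\gamma(\lambda)=(\lambda^{a},\lambda^{b})$ with $a,b>0$ chosen so that the weights of $\gamma$ separate all the monomials; then the $\gamma$-fixed locus coincides with the $T$-fixed locus (the finitely many monomial ideals), and for every point of the punctual Hilbert scheme the limit $\lim_{\lambda\to 0}\gamma(\lambda)\cdot Z$ exists because the punctual Hilbert scheme is projective.

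\textbf{From the torus action to cells.} With these inputs the B\l{}a\l{}ynicki-Birula decomposition applies: the punctual Hilbert scheme stratifies into locally closed pieces $C_{I}$, one for each monomial ideal $I$, where $C_{I}=\{Z:\lim_{\lambda\to0}\gamma(\lambda)\cdot Z=I\}$, and each $C_{I}$ is an affine space whose dimension equals the number of negative weights of $\gamma$ on the Zariski tangent space $T_{I}\,\mathrm{Hilb}^{n}$. So the combinatorial heart of the proof is to compute, for each monomial ideal $I$ corresponding to a partition of $n$, the dimension of its attracting cell as the count of negative tangent weights, and then to organize these dimensions. Using the standard description of the tangent space $T_{I}\,\mathrm{Hilb}^{n}=\Hom_{k[s,t]}(I,k[s,t]/I)$ together with the $T$-weight decomposition coming from the arm-and-leg (hook) statistics of the Young diagram, one shows that the cells of dimension $d$ are counted by exactly those partitions of $n$ whose associated attracting-cell dimension is $d$, and a generating-function / bijective manipulation identifies this number with $P(d,n-d)$.

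\textbf{Main obstacle.} The first two steps (the torus action, identification of fixed points with partitions, existence of limits, and the resulting B\l{}a\l{}ynicki-Birula cell structure with cells that are affine spaces) are by now standard and I expect them to go through cleanly once the genericity of $\gamma$ is pinned down. The real work, and the part I expect to be the main obstacle, is the combinatorial bookkeeping in the last step: translating the negative-weight count on the tangent space into the precise statistic "number of partitions of $d$ into parts $\leq n-d$." This requires carefully matching the hook-length weight data of each monomial ideal against the bound $n-d$, and verifying the identity $\#\{d\text{-dimensional cells}\}=P(d,n-d)$ rather than some other partition statistic; I would handle it by a direct dimension count on the tangent-space weights followed by a generating-function identity or an explicit bijection between the relevant Young diagrams and constrained partitions.
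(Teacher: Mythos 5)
First, note that the paper itself offers no argument for this proposition: it is quoted verbatim from \cite[Prop.~4.2]{ES87}. So your attempt has to be measured against the Ellingsrud--Str{\o}mme proof you set out to reconstruct, and there it has one genuine gap, located exactly at the step you predicted would ``go through cleanly.'' The Bia{\l}ynicki-Birula theorem you invoke --- each stratum $C_I=\{Z:\lim_{\lambda\to 0}\gamma(\lambda)\cdot Z=I\}$ is an affine space whose dimension is read off from the weights on the tangent space --- is a theorem about \emph{smooth} varieties. The punctual Hilbert scheme $\Hilb^{n}(k[[s,t]])$ is singular for $n\geq 3$ (it is irreducible of dimension $n-1$ by Brian\c{c}on; for $n=3$ it is the cone over a twisted cubic), so you cannot apply Bia{\l}ynicki-Birula to it directly: for a singular projective variety with a $\mathbb{G}_m$-action the attracting strata need not be affine spaces at all (a nodal cubic with its $k^{*}$-action already fails, the unique stratum being the whole curve), and the space $\Hom_{k[s,t]}(I,k[s,t]/I)$ on which you propose to count weights is the tangent space of the ambient $\Hilb^{n}(\bba^{2})$, not of the punctual Hilbert scheme. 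This smoothness issue, not the partition combinatorics you flagged, is the actual crux of \cite{ES87}.

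The way around it is a trick worth remembering. Run the one-parameter subgroup $\gamma(\lambda)\cdot(s,t)=(\lambda^{a}s,\lambda^{b}t)$ with \emph{both} weights positive (and generic) on the smooth projective variety $\Hilb^{n}(\bbp^{2})$ (smooth by Fogarty), and look at the \emph{repelling} cells $W^{-}(I)=\{Z:\lim_{\lambda\to\infty}\gamma(\lambda)\cdot Z=I\}$ attached to the monomial ideals $I$ supported at the origin. If $Z$ has a support point other than the origin, that point escapes toward the line at infinity as $\lambda\to\infty$, so its backward limit cannot be such an $I$; conversely the punctual locus is projective and invariant, so backward limits exist and stay inside it. Hence the cells $W^{-}(I)$, which are honest affine-space cells of the \emph{smooth} ambient Hilbert scheme, are contained in and exactly partition $\Hilb^{n}(k[[s,t]])$: the singular variety inherits its cell decomposition from the smooth one, and only now is the dimension of a cell legitimately the number of negative $\gamma$-weights on $\Hom(I,\mathcal{O}/I)$. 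After this, the combinatorics is the easy part rather than the main obstacle: taking weights $(1,N)$ with $N\gg n$, the negative weights at $I_{\lambda}$ in the arm--leg character formula correspond to the boxes of $\lambda$ with nonzero leg, so the cell has dimension $n-\ell(\lambda)$ (up to replacing $\lambda$ by its conjugate), and deleting one box from each part identifies partitions of $n$ with exactly $n-d$ parts with partitions counted by $P(d,n-d)$.
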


Denote by $p(n,d)$ the number of partitions of $n$ into $d$ parts. Then $p(n,d)=P(n-d,d)$. Now we can proceed similarly as in the proof of \cite[Lemma 2.9]{Go90}. 

\begin{proof}[Proof of Theorem \ref{thm1}]

Since we have 
\[
\prod_{i=1}^{\infty}\left(\frac{1}{1-z^{i-1}t^{i}}\right)=\sum_{n=0}^{\infty}\sum_{i=0}^{\infty}p(n,n-i)t^{n}z^{i},
\]
by Proposition \ref{cell} we get
\[
\sum_{n=0}^{\infty}\sum_{m=0}^{\infty}\#\{\text{m-dim cells of }{\rm Hilb}^{n}(\overline{\bbf}_{p}[[s,t]])\}t^{n}z^{m}=\prod_{i=1}^{\infty}\frac{1}{1-z^{i-1}t^{i}}.
\]
Fix $N\in\mathbb{N}$. Then by choosing sufficiently large $q$ powers $Q$ such that the cell decomposition of $V_{n,\overline{\bbf}_{q}}$ is defined over $\mathbb{F}_{Q}$ for $n\leq N$, we deduce that
\[
\sum_{n=0}^{\infty}|V_{n,\overline{\bbf}_{q}}(\mathbb{F}_{Q^{r}})|t^{nr}\equiv\prod_{i=1}^{\infty}\frac{1}{1-Q^{r(i-1)}t^{ri}}\ \ \ \text{mod }t^N.
\]
Now consider a good reduction of $S$ over $\bbf_q$.
\[
\begin{aligned}
\sum_{n=0}^{\infty}|S^{[n]}(\overline{\bbf}_{q})^{gF_{Q}}|t^{n}&\equiv\prod_{r=1}^{\infty}\prod_{i=1}^{\infty}\left(\frac{1}{1-Q^{r(i-1)}t^{ri}}\right)^{|P_{r}(S,gF_{Q})|}\ \ \ \text{mod }t^N\\
&=\text{exp}\left(\sum_{i=1}^{\infty}\sum_{r=1}^{\infty}\sum_{h=1}^{\infty}|P_{r}(S,gF_{Q})|Q^{hr(i-1)}t^{hri}/h\right)\\
&=\text{exp}\left(\sum_{i=1}^{\infty}\sum_{m=1}^{\infty}(\sum_{r|m}r|P_{r}(S,gF_{Q})|)Q^{m(i-1)}t^{mi}/m\right)\\
&=\prod_{i=1}^{\infty}\text{exp}\left(\sum_{m=1}^{\infty}|S(\overline{\bbf}_{q})^{g^{m}F_{Q}^{m}}|Q^{m(i-1)}t^{mi}/m\right)\\
&=\prod_{i=1}^{\infty}\sum_{n=0}^{\infty}|S^{(n)}(\overline{\bbf}_{q})^{gF_{Q}}|Q^{n(i-1)}t^{ni}.
\end{aligned}
\]
By replacing $Q$ by $Q$-powers and using the proof of Proposition \ref{symmetric} and Theorem \ref{padichodge}, we obtain
\[
\sum_{n=0}^{\infty}E(S^{[n]})t^{n}=\prod_{m=1}^{\infty}\prod_{p,q}\left({\sum_{i=0}^{h_{p,q}}(-1)^{i}[\wedge^{i}H^{p,q}(S,\bbc)]u^{i(p+m-1)}v^{i(q+m-1)}t^{mi}} \right)^{(-1)^{p+q+1}},
\]
since we can reduce to the case where everything is defined over a number field K as in \cite[Prop. 5.1]{It03}.
\end{proof}

\begin{cor} 
For a smooth projective surface $S$ over $\overline{\bbf}_{p}$ or $\bbc$, we have
\[
\sum_{n=0}^{\infty}[e(S^{[n]})]t^{n}=\prod_{m=1}^{\infty}\prod_{j=0}^{4}\left(\sum_{i=0}^{b_{j}}(-1)^{i}[\wedge^{i}H^{j}(S,\mathbb{Q}_{l})][-2i(m-1)]t^{mi}\right)^{(-1)^{j+1}},
\]
where the coefficients lie in $R_{\bbq_{l}}(G)$, and $[-2i(m-1)]$ indicates shift in degrees.
\end{cor}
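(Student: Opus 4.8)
The plan is to run the argument of Theorem~\ref{thm1} one level lower, stopping at $l$-adic étale cohomology rather than lifting to Hodge pieces: the Corollary records only the cohomological grading (through the shift $[-2i(m-1)]$) and not the full Hodge bigrading, so no $p$-adic Hodge theory is needed and Theorem~\ref{padichodge} is replaced by Lemma~\ref{Weil} together with the étale--singular comparison. Note that one cannot simply set $u=v=1$ in Theorem~\ref{thm1} and regroup by $p+q=j$, because $\wedge^{i}H^{j}(S)\neq\bigoplus_{p+q=j}\wedge^{i}H^{p,q}(S)$ in general; the cleanest route is therefore to work directly at the $l$-adic level, where Proposition~\ref{symmetric} already produces the full exterior powers $\wedge^{i}H^{j}$. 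Working first over $\overline{\bbf}_{q}$, I fix $g\in G$ and take as input the last line of the displayed computation in the proof of Theorem~\ref{thm1}, the point-counting identity (renaming the product index to $m$ and the symmetric power to $k$)
\[
\sum_{n=0}^{\infty}|S^{[n]}(\overline{\bbf}_{q})^{gF_{Q}}|t^{n}=\prod_{m=1}^{\infty}\sum_{k=0}^{\infty}|S^{(k)}(\overline{\bbf}_{q})^{gF_{Q}}|Q^{k(m-1)}t^{km},
\]
which is valid as formal power series, holding modulo $t^{N}$ for every $N$ once $Q$ is taken large enough, by Lemmas~\ref{hilbert scheme} and~\ref{fixpoint} and Proposition~\ref{cell}.

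Next I feed the trace form of Proposition~\ref{symmetric} into the inner sum: writing each count as a $gF_{Q}$-trace on $\bigoplus_{a}H^{a}(S^{(k)}_{\overline{\bbf}_q},\bbq_l)$, Proposition~\ref{symmetric} rewrites $\sum_{k}|S^{(k)}|_{gF_{Q}}t^{k}$ as $\prod_{j=0}^{4}\bigl(\sum_{i}(-1)^{i}\Tr(gF_{Q},\wedge^{i}H^{j}(S,\bbq_l))\,t^{i}\bigr)^{(-1)^{j+1}}$. The only new ingredient is the twist $Q^{k(m-1)}$: multiplying $|S^{(k)}|_{gF_{Q}}$ by $Q^{k(m-1)}$ replaces each Frobenius eigenvalue $\beta$ on $H^{a}(S^{(k)})$ by $Q^{k(m-1)}\beta$, i.e. it is the Tate twist $H^{a}(S^{(k)})(-k(m-1))$, which raises the weight by $2k(m-1)$. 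Since $S^{(k)}$ is a finite quotient of the smooth projective variety $S^{k}$ its rational cohomology is pure, so weight equals cohomological degree and the twist is exactly the degree shift $[-2k(m-1)]$. Inserting the $m$-th factor thus amounts to the substitution $t\mapsto Q^{m-1}t^{m}$ on the symmetric-product generating function, and because this substitution distributes over the product $\prod_{j}(\cdots)^{(-1)^{j+1}}$, each elementary term $\wedge^{i}H^{j}(S)$ (sitting in its intrinsic degree $ij$ and carrying $t^{i}$) acquires the weight $Q^{i(m-1)}$ and the variable $t^{im}$. Assembling the product over $m$ then produces precisely the graded piece $[\wedge^{i}H^{j}(S,\bbq_l)][-2i(m-1)]t^{mi}$ in the factor $(\cdots)^{(-1)^{j+1}}$.

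Finally I promote this identity of $gF_{Q}$-traces to one of graded virtual $G$-representations in $R_{\bbq_l}(G)$. As in the proofs of Lemma~\ref{Weil} and Proposition~\ref{symmetric}, I run the identity with $F_{Q}$ replaced by all its powers $F_{Q}^{s}$, use that the eigenvalues of $F_{Q}$ on $H^{a}$ have absolute value $Q^{a/2}$ to separate contributions of distinct cohomological degrees via the linear independence of the characters $s\mapsto\alpha^{s}$, and then let $g$ range over $G$ to recover each $G$-representation from its character. This establishes the Corollary over $\overline{\bbf}_{q}$; to descend to $\bbc$ I spread $S$ out over a finitely generated $\bbz$-subalgebra of $\bbc$, choose a good reduction as in the Preliminaries, and transport the identity along the comparison isomorphism $H^{j}_{\acute{e}t}(S_{\bbc},\bbq_l)\cong H^{j}_{\acute{e}t}(S_{\overline{\bbf}_q},\bbq_l)$ of $G$-representations, exactly as in \cite[Prop.~5.1]{It03}. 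The main obstacle is the bookkeeping of the middle paragraph: one must confirm that the single exterior-power index $i$ from Proposition~\ref{symmetric} is the same index that the Göttsche twist $Q^{k(m-1)}$ weights, so that the shift lands as $[-2i(m-1)]$ rather than, say, $[-2ij(m-1)]$. This distribution works because the twist is $Q^{(\text{total symmetric power})(m-1)}$ and hence factors multiplicatively through the substitution $t\mapsto Q^{m-1}t^{m}$, and because purity of $S^{(k)}$ lets us read the resulting weight as a cohomological-degree shift.
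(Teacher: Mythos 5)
Your proof is correct and follows the same route the paper intends for this corollary: the point-count identity from the proof of Theorem \ref{thm1}, fed into Proposition \ref{symmetric} with the twist $Q^{k(m-1)}$ read as a Tate twist/degree shift via purity, and then Lemma \ref{Weil}--style separation of degrees and characters, with the usual spreading-out for the complex case. One small quibble: your parenthetical claim that one cannot recover this from Theorem \ref{thm1} by setting $u=v$ is mistaken, since $\sum_i(-1)^i[\wedge^i(V\oplus W)]x^i=\bigl(\sum_i(-1)^i[\wedge^i V]x^i\bigr)\bigl(\sum_i(-1)^i[\wedge^i W]x^i\bigr)$ makes the product over $p+q=j$ collapse to the exterior-power series of $H^j$; this does not affect the validity of the argument you actually give, which is in any case needed for the $\overline{\bbf}_p$ case.
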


\begin{remark}\label{rmk} 
Notice that the generating series of the topological Euler characteristic of $S^{[n]}$ is $\sum_{n=0}^{\infty}e(S^{[n]})t^{n}=\prod_{m=1}^{\infty}(1-t^{m})^{-e(S)}$. But this is not the case if we consider $G$-representations and regard $\prod_{m=1}^{\infty}(1-t^{m})^{-[e(S)]}$ as 
\[
\exp(\sum_{m=1}^{\infty}[e(S)](-\log(1-t^{m})))=\exp(\sum_{m=1}^{\infty}[e(S)](\sum_{k=1}^{\infty}t^{mk}/k))). 
\]
What we have is actually 
\[
\sum_{n=0}^{\infty}{\Tr}(g,[e(S^{[n]})])t^{n}=\prod_{m=1}^{\infty}\left(\frac{(\prod_{i=1}^{b_1}(1-{g_{1,i}}t^{m}))(\prod_{i=1}^{b_3}(1-{g_{3,i}}t^{m}))}{(1-t^{m})(\prod_{i=1}^{b_2}(1-{g_{2,i}}t^{m}))(1-t^{m})}\right)
\] 
\[
=\exp\left(\sum_{m=1}^{\infty}\sum_{k=1}^{\infty}\frac{t^{mk}}{k}\left(1-\sum_{i=1}^{b_{1}}g_{1,i}^{k}+\sum_{i=1}^{b_{2}}g_{2,i}^{k}-\sum_{i=1}^{b_{3}}g_{3,i}^{k}+1\right)\right).
\]
We will use this expression to determine the $G$-representation $[e(S^{[n]})]$ later when $S$ is a K3 surface or an abelian surface.
\end{remark}

Now we start to prove lemma \ref{fixpoint}.

Let $S$ be a smooth projective surface over $\mathbb{F}_{q}$ with an automorphism $g$ over $\mathbb{F}_{q}$ of finite order. If $x\in S(\overline{\bbf}_{q})^{gF_{q}}$ where $F_{q}$ is the geometric Frobenius, then $x$ lies over a closed point $y\in S$. Denote the residue degree of $y$ by $N$. Hence $x\in S(\mathbb{F}_{q^{N}})$ and there are $N$ geometric points $x, F_{q}(x),..., F_{q}^{N-1}(x)$ lying over $y$. 

Let us study the relative Hilbert scheme of $n$ points at a closed point.
\[
{\rm  Hilb}^{n}({\rm Spec}(\widehat{\mathcal{O}_{S,y}})/{\rm Spec}\mathbb{F}_{q})\cong{\rm Hilb}^{n}({\rm Spec}(\mathbb{F}_{q^{N}}[[s,t]])/{\rm Spec}\mathbb{F}_{q}).
\]

Since $g$ and $\mathbb{F}_{q}$ fix $y$, they act on this Hilbert scheme. Over $\overline{\bbf}_{q}$, we have
\[
 {\rm  Hilb}^{n}({\rm Spec}(\widehat{\mathcal{O}_{S,y}})/{\rm Spec}\mathbb{F}_{q})\otimes_{\mathbb{F}_{q}}\overline{\bbf}_{q}\cong{\rm Hilb}^{n}({\rm Spec}(\overline{\bbf}_{q}\otimes_{\mathbb{F}_{q}}\mathbb{F}_{q^{N}}[[s,t]])/{\rm Spec}\overline{\bbf}_{q})
\]
 by the base change property of the Hilbert scheme. Denote by $u$ a primitive element of the field extension $\mathbb{F}_{q^{N}}/\mathbb{F}_{q}$ and denote by $f(x)$ the irreducible polynomial of $u$ over $\mathbb{F}_{q}$. Since we have an $\overline{\bbf}_{q}$-algebra isomorphism 
\[
\overline{\bbf}_{q}\otimes_{\mathbb{F}_{q}}\mathbb{F}_{q^{N}}\cong\overline{\bbf}_{q}\otimes_{\mathbb{F}_{q}}(\mathbb{F}_{q}[x]/(f(x)))\cong \overline{\bbf}_{q}[x]/(x-u)\times...\times\overline{\bbf}_{q}[x]/(x-u^{q^{N-1}})
\]
by the Chinese Remainder Theorem, we deduce that
\[
\begin{aligned}
 {\rm  Hilb}^{n}({\rm Spec}(\widehat{\mathcal{O}_{S,y}})/{\rm Spec}\mathbb{F}_{q})\otimes_{\mathbb{F}_{q}}\overline{\bbf}_{q}&\cong{\rm Hilb}^{n}({\rm Spec}((\overline{\bbf}_{q}\times...\times\overline{\bbf}_{q})[[s,t]])/{\rm Spec}\overline{\bbf}_{q})\\
&\cong{\rm Hilb}^{n}(\coprod {\rm Spec}\overline{\bbf}_{q}[[s,t]]/{\rm Spec}\overline{\bbf}_{q}).
\end{aligned}
\]
Hence the $\overline{\bbf}_{q}$-valued points of ${\rm  Hilb}^{n}({\rm Spec}(\widehat{\mathcal{O}_{S,y}})/{\rm Spec}\mathbb{F}_{q})$ correspond to the closed subschemes of degree $n$ of $\coprod {\rm Spec}\overline{\bbf}_{q}[[s,t]]$, i.e. the closed subschemes of degree $n$ of $S$ whose underlying space is a subset of the points $x, F_{q}(x),..., F_{q}^{N-1}(x)$.

Since $F_{q}$ acts on $\mathbb{F}_{q^{N}}[[s,t]]$ by sending $s$ to $s^{q}$, $t$ to $t^{q}$ and $c\in\mathbb{F}_{q^{N}}$ to $c^{q}$, we deduce from the above discussion that $F_{q}$ acts on $(\overline{\bbf}_{q}\times...\times\overline{\bbf}_{q})[[s,t]]$ by sending $s$ to $s^{q}$, $t$ to $t^{q}$ and $(\alpha_0,\alpha_1,...,\alpha_{N-2},\alpha_{N-1})\in \overline{\bbf}_{q}\times...\times\overline{\bbf}_{q}$ to $(\alpha_1,\alpha_{2},...,\alpha_{N-1},\alpha_{0})$. This is actually an algebraic assertion, which can also be seen geometrically. For example, $F_{q}$ is a $\overline{\bbf}_{q}$-morphism from $\widehat{\mathcal{O}_{S_{\overline{\bbf}_{q}},F_{q}(x)}}\cong(\{0\}\times\overline{\bbf}_{q}\times...\times\{0\})[[s,t]]$ to $\widehat{\mathcal{O}_{S_{\overline{\bbf}_{q}},x}}\cong(\overline{\bbf}_{q}\times\{0\}\times...\times\{0\})[[s,t]]$.

Let $\sigma$ be an element of ${\rm Gal}(\mathbb{F}_{q^{N}}/\mathbb{F}_{q})$. Recall that for an $\mathbb{F}_{q^{N}}$-vector space $V$, a $\sigma$-linear map $f:V\rightarrow V$ is an additive map on $V$ such that $f(\alpha v)=\sigma(\alpha)f(v)$ for all $\alpha\in\mathbb{F}_{q^{N}}$ and $v\in V$.

\begin{lemma}
Let $H=\left<g\right>$. Suppose $p\nmid|H|$. Then we can choose $s$ and $t$ such that $g$ acts on $\mathbb{F}_{q^{N}}[[s,t]]$ $\sigma$-linearly, where $\sigma$ is the inverse of the Frobenius automorphism of ${\rm Gal}(\mathbb{F}_{q^{N}}/\mathbb{F}_{q})$.
\end{lemma}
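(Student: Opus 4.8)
The plan is to reduce the statement to a linearization problem for a single semilinear ring automorphism and then solve that problem by averaging over the cyclic group $H=\left<g\right>$, which is exactly where the hypothesis $p\nmid|H|$ is needed.

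First I would record the geometry. Since $g$ is defined over $\mathbb{F}_{q}$ it commutes with $F_{q}$, and the relation $gF_{q}(x)=x$ together with the fact that $F_{q}$ cyclically permutes the $N$ geometric points $x,F_{q}(x),\dots,F_{q}^{N-1}(x)$ over the closed point $y$ shows that $g$ preserves this orbit, hence fixes $y$. Therefore $g$ induces an $\mathbb{F}_{q}$-algebra automorphism of $\widehat{\mathcal{O}_{S,y}}\cong\mathbb{F}_{q^{N}}[[s,t]]$ preserving $\mathfrak{m}=(s,t)$, and it restricts to some $\bar g\in\Gal(\mathbb{F}_{q^{N}}/\mathbb{F}_{q})$ on the coefficient field $\kappa(y)=\mathbb{F}_{q^{N}}$. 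Writing each geometric point over $y$ as an embedding $\iota\colon\mathbb{F}_{q^{N}}\hookrightarrow\overline{\bbf}_{q}$, the automorphism $g$ acts by precomposition with $\bar g$ while $F_{q}$ acts by postcomposition with the $q$-power map; feeding these into $gF_{q}(x)=x$ and cancelling the injective embedding attached to $x$ forces $\bar g(c)=c^{q^{N-1}}$ for all $c$, i.e. $\bar g=\sigma$, the inverse Frobenius.

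Note that once $\bar g=\sigma$, the map $g$ is already a $\sigma$-linear endomorphism of the $\mathbb{F}_{q^{N}}$-vector space $\mathbb{F}_{q^{N}}[[s,t]]$ for \emph{any} coordinates, since $g(\alpha f)=g(\alpha)g(f)=\sigma(\alpha)g(f)$. So the genuine content of choosing $s,t$ is to \emph{linearize} $g$, i.e. to arrange that $g$ carries the $2$-dimensional space $\langle s,t\rangle_{\mathbb{F}_{q^{N}}}$ into itself. To do this I would average over $H$. Let $V=\mathfrak{m}/\mathfrak{m}^{2}$, on which $g$ induces a $\sigma$-semilinear automorphism $\ell$ of order dividing $m:=|H|$, pick any $\mathbb{F}_{q^{N}}$-linear section $\lambda_0$ of $\pi\colon\mathfrak{m}\to V$, and set
\[
\lambda(\bar v):=\frac{1}{m}\sum_{i=0}^{m-1} g^{i}\bigl(\lambda_0(\ell^{-i}\bar v)\bigr),
\]
which is legitimate precisely because $p\nmid m$. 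A direct check gives $\pi\circ\lambda=\mathrm{id}_{V}$ and $g\circ\lambda=\lambda\circ\ell$, and — because the twist $\sigma^{-i}$ coming from $\ell^{-i}$ is cancelled by the twist $\sigma^{i}$ coming from $g^{i}$ — that $\lambda$ is in fact $\mathbb{F}_{q^{N}}$-linear. Taking a basis $\bar s,\bar t$ of $V$ and setting $s:=\lambda(\bar s)$, $t:=\lambda(\bar t)$, the graded Nakayama lemma and the Cohen structure theorem show that $s,t$ is a regular system of parameters and $\widehat{\mathcal{O}_{S,y}}\cong\mathbb{F}_{q^{N}}[[s,t]]$, while $g(s)=\lambda(\ell\bar s)$ and $g(t)=\lambda(\ell\bar t)$ lie in $\lambda(V)=\langle s,t\rangle_{\mathbb{F}_{q^{N}}}$. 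Thus $g$ sends $s,t$ to $\mathbb{F}_{q^{N}}$-linear forms in $s,t$ and acts on the whole ring as the $\sigma$-semilinear extension of this linear action, which is the assertion.

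The main obstacle is this linearization step: because $g$ is only $\sigma$-semilinear (not $\mathbb{F}_{q^{N}}$-linear), one must track the Galois twists carefully and verify that the averaged operator $\lambda$ is simultaneously a section, $g$-equivariant, and $\mathbb{F}_{q^{N}}$-linear, and that its values are honest generators of $\mathfrak{m}$. The hypothesis $p\nmid|H|$ enters at exactly one point, namely to form the average; without it the higher-order terms of $g(s)$ and $g(t)$ cannot in general be eliminated, and the conclusion fails. This is a semilinear analogue of the classical fact that a finite-order automorphism (of order prime to the characteristic) of a power series ring fixing the closed point is linearizable.
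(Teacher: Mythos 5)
Your argument is correct and is essentially the paper's own: both first identify the action of $g$ on the coefficient field as $\sigma$ via the relation $gF_{q}(x)=x$, and both then linearize $g$ by averaging over $H$, which is exactly where $p\nmid|H|$ enters. The only difference is packaging --- the paper averages $h\rho(h)^{-1}$ over $h\in H$ to produce a coordinate change $\theta$ with $\theta^{-1}g\theta=\rho(g)$, whereas you average to build a $g$-equivariant $\mathbb{F}_{q^{N}}$-linear section of $\mathfrak{m}\to\mathfrak{m}/\mathfrak{m}^{2}$ and take the new coordinates from its image; these are two standard formulations of the same linearization trick.
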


\begin{proof}
The automorphism $g$ acts as an $\mathbb{F}_{q}$-automorphism on $\mathbb{F}_{q^{N}}[[s,t]]$ fixing the ideal $(s,t)$ and sending $\mathbb{F}_{q^{N}}$ to $\mathbb{F}_{q^{N}}$. Since we know $F_{q}$ sends $(\alpha_0,\alpha_1,...,\alpha_{N-2},\alpha_{N-1})\in \overline{\bbf}_{q}\times...\times\overline{\bbf}_{q}$ to $(\alpha_1,\alpha_{2},...,\alpha_{N-1},\alpha_{0})$ and $gF_{q}$ fixes the geometric points $x, F_{q}(x),..., F_{q}^{N-1}(x)$, we deduce that $g$ sends $(\alpha_0,\alpha_1,...,\alpha_{N-2},\alpha_{N-1})\in \overline{\bbf}_{q}\times...\times\overline{\bbf}_{q}$ to $(\alpha_{N-1},\alpha_{0},...,\alpha_{N-3},\alpha_{N-2})$. Hence $g(\alpha)=\sigma(\alpha)$ for all $\alpha\in\mathbb{F}_{q^{N}}$ where $\sigma$ is the inverse of the Frobenius automorphism.

For any element $h\in H$, we write $h(s)=as+bt+...$ and $h(t)=cs+dt+...$ where $a,b,c,d\in\mathbb{F}_{q}$ since $h$ commutes with $F_{q}$. Define an automorphism $\rho(h)$ of $\mathbb{F}_{q^{N}}[[s,t]]$ by $\rho(h)(s)=as+bt$, $\rho(h)(t)=cs+dt$ and the action of $\rho(h)$ on $\mathbb{F}_{q^{N}}$ is the same as the action of $h$. Then we denote the $\mathbb{F}_{q^{N}}$-automorphism $\frac{1}{|H|}\sum_{h\in H}h\rho(h)^{-1}$ by $\theta$. Notice that $\theta$ is an automorphism because the linear term of $\theta$ is an invertible matrix, and here is the only place we use the assumption that $p\nmid|G|$. We deduce that $g\theta=\theta\rho(g)$, which implies $\theta^{-1}g\theta=\rho(g)$. Hence we are done. 
\end{proof}

The above discussion implies that the $g$-action on $(\overline{\bbf}_{q}\times...\times\overline{\bbf}_{q})[[s,t]]$ is given by sending $s$ to $(a,...,a)s+(b,...,b)t$, $t$ to $(c,...,c)s+(d,...,d)t$ and $(\alpha_0,\alpha_1,...,\alpha_{N-2},\alpha_{N-1})\in \overline{\bbf}_{q}\times...\times\overline{\bbf}_{q}$ to $(\alpha_{N-1},\alpha_{0},...,\alpha_{N-3},\alpha_{N-2})$.

Hence the action of $gF_{q}$ on $(\overline{\bbf}_{q}\times...\times\overline{\bbf}_{q})[[s,t]]$ is given by sending $s$ to $(a,...,a)s^{q}+(b,...,b)t^{q}$, $t$ to $(c,...,c)s^{q}+(d,...,d)t^{q}$ and $(\alpha_0,\alpha_1,...,\alpha_{N-2},\alpha_{N-1})\in \overline{\bbf}_{q}\times...\times\overline{\bbf}_{q}$ to itself. This implies that $gF_{q}$ acts on each complete local ring, which is what we expected since $gF_{q}$ fixes each geometric point over $y$. In particular, it acts on $\widehat{\mathcal{O}_{S_{\overline{\bbf}_{q}},x}}\cong(\overline{\bbf}_{q}\times\{0\}\times...\times\{0\})[[s,t]]\cong\overline{\bbf}_{q}[[s,t]]$.

Recall that ${\rm Hilb}^{n}(\widehat{\mathcal{O}_{S_{\overline{\bbf}_{q}},x}})(\overline{\bbf}_{q})$ parametrizes closed subschemes of degree $n$ of $S_{\overline{\bbf}_{q}}$ supported on $x$.

\begin{proof}[Proof of Lemma \ref{fixpoint}]
First we define an $\mathbb{F}_{q}$-automorphism $\tilde{g}$ on $\mathbb{F}_{q}[[s,t]]$ by 
\[
\tilde{g}(s)=as+bt\ \ {\rm and}\ \ \tilde{g}(t)=cs+dt
\]
Recall that the action of $F_{q}$ on $\mathbb{F}_{q}[[s,t]]$ is an $\mathbb{F}_{q}$-endomorphism sending $s$ to $s^{q}$ and $t$ to $t^{q}$. By the above discussion, we observe that the action of $gF_{q}$ on $\overline{\bbf}_{q}[[s,t]]$ on the left is the same as the action of $\tilde{g}{F_{q}}$ on $\overline{\bbf}_{q}[[s,t]]$ on the right. Hence we have
\[
|{\rm Hilb}^{n}(\widehat{\mathcal{O}_{S_{\overline{\bbf}_{q}},x}})(\overline{\bbf}_{q})^{gF_{q}}|=|{\rm Hilb}^{n}(\mathbb{F}_{q}[[s,t]])(\overline{\bbf}_{q})^{\tilde{g}{F_{q}}}|.
\]
Now for the right hand side, $\tilde{g}$ is an automorphism of finite order and $F_{q}$ is the geometric Frobenius. Then by the Grothendieck trace formula, we have
\[
|{\rm Hilb}^{n}(\mathbb{F}_{q}[[s,t]])(\overline{\bbf}_{q})^{\tilde{g}{F_{q}}}|=\sum_{k=0}^{\infty}(-1)^{k}{\rm Tr}((\tilde{g}{F_{q}})^{*}, H^{k}_{c}({\rm Hilb}^{n}(\overline{\bbf}_{q}[[s,t]]),\mathbb{Q}_{l})).
\] 
But the action of $\tilde{g}$ factors through $GL_{2}(\overline{\bbf}_{q})$. Now we use the fact that if $G$ is a connected algebraic group acting on a separated and finite type scheme $X$, then the action of $g\in G$ on $H^{*}_{c}(X,\mathbb{Q}_{l})$ is trivial \cite[Corollary 6.5]{DL76}. Hence we have
 \[
\begin{aligned}
|{\rm Hilb}^{n}(\mathbb{F}_{q}[[s,t]])(\overline{\bbf}_{q})^{\tilde{g}{F_{q}}}|&=\sum_{k=0}^{\infty}(-1)^{k}{\rm Tr}(({F_{q}})^{*}, H^{k}_{c}({\rm Hilb}^{n}(\overline{\bbf}_{q}[[s,t]]),\mathbb{Q}_{l}))\\
&=|{\rm Hilb}^{n}(\mathbb{F}_{q}[[s,t]])(\overline{\bbf}_{q})^{F_{q}}|.
\end{aligned}
\] 
\end{proof}

Suppose $S$ is a smooth projective K3 surface over $\overline{\bbf}_{p}$ with a G-action. Recall that a \emph{Mathieu representation} of a finite group G is a 24-dimensional representation on a vector space $V$ over a field of characteristic zero with character
\[
\chi(g)=\epsilon(\ord(g)),
\]
where
\[
\epsilon(n)=24(n\prod_{p|n}(1+\frac{1}{p}))^{-1}.
\]

\begin{prop}\cite[Proposition 4.1]{DK09}\label{symplectic fact}
Let G be a finite group of symplectic automorphisms of a K3 surface $X$ defined in characteristic $p>0$. Assume that $p\nmid G$. Then for any prime $l\neq p$, the natural representation of G on the $l$-adic cohomology groups $H^{*}(X,\bbq_{l})\cong\bbq_{l}^{24}$ is Mathieu.
\end{prop}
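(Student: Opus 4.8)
The plan is to reduce the character identity to a count of fixed points and then to evaluate that count. For a K3 surface $X$ the cohomology $H^{*}(X,\bbq_{l})$ is concentrated in even degrees, with $b_{0}=b_{4}=1$ and $b_{2}=22$, so $\dim_{\bbq_{l}}H^{*}=24$ and $G$ acts trivially on $H^{0}$ and $H^{4}$. The Mathieu character to be computed is $\chi(g)=\sum_{i}\Tr(g,H^{i}(X,\bbq_{l}))$, and since the odd cohomology vanishes this equals the Lefschetz number $\sum_{i}(-1)^{i}\Tr(g,H^{i}(X,\bbq_{l}))$. By the Grothendieck--Lefschetz fixed point formula for the finite-order automorphism $g$ of the smooth proper $X$ (tame since $p\nmid|G|$), this number is the Euler characteristic $e(X^{g})$ of the fixed locus. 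Thus the proposition reduces to showing that $X^{g}$ is a finite reduced set of points with $|X^{g}|=\epsilon(\ord(g))$.

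First I would linearize the action at a fixed point: since $p\nmid\ord(g)$, the automorphism $g$ acts on the completed local ring at a fixed point $x$ through its differential $dg_{x}\in GL(T_{x}X)$, exactly as in the proof of Lemma \ref{fixpoint}. Because $g$ is symplectic it fixes the canonical class, so $\det dg_{x}=1$ and the eigenvalues of $dg_{x}$ are $\zeta,\zeta^{-1}$ for a root of unity $\zeta$ whose order divides $n=\ord(g)$. An eigenvalue equal to $1$ would force a positive-dimensional fixed locus, which is impossible for a symplectic automorphism; hence $\zeta\neq 1$, every fixed point is isolated and reduced, and $X^{g}$ is finite with $e(X^{g})=|X^{g}|$. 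I would then apply the holomorphic (algebraic) Lefschetz fixed point formula to $\mathcal{O}_{X}$. As $g$ acts trivially on $H^{0}(\mathcal{O}_{X})$ and on $H^{2}(\mathcal{O}_{X})\cong H^{0}(X,\omega_{X})^{\vee}$, the formula reads $\sum_{x\in X^{g}}\frac{1}{(1-\zeta_{x})(1-\zeta_{x}^{-1})}=\chi(\mathcal{O}_{X})=2$.

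To turn this single identity into the count I would stratify $X^{g}$ by the local type $\{\zeta_{x},\zeta_{x}^{-1}\}$ and apply the same formula to the powers $g^{k}$ for $k\mid n$. Passing from $g$ to $g^{k}$ sends a point of type $\{\zeta,\zeta^{-1}\}$ to one of type $\{\zeta^{k},\zeta^{-k}\}$ and enlarges the fixed locus, $X^{g}\subseteq X^{g^{k}}$; moreover a type whose $k$-th power is trivial cannot occur, since that would make $x$ a non-isolated fixed point of the symplectic automorphism $g^{k}$. Matching the weighted counts over all $k$ produces a small linear system in the type multiplicities whose solution gives $|X^{g}|=24(n\prod_{p\mid n}(1+\frac{1}{p}))^{-1}=\epsilon(n)$. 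For example, order $2$ forces every eigenvalue to be $-1$, so each local term is $\frac{1}{4}$ and $|X^{g}|/4=2$ gives $8$; order $5$ forces the two conjugate types to occur with a common multiplicity $a$, and writing $D_{1},D_{2}$ for the two denominators $(1-\zeta)(1-\zeta^{-1})$, which satisfy $D_{1}D_{2}=\Phi_{5}(1)=5$ and $D_{1}+D_{2}=5$, one gets $a(\frac{1}{D_{1}}+\frac{1}{D_{2}})=a=2$, hence $|X^{g}|=2a=4$. This is Nikulin's characteristic-zero computation carried out in the tame setting.

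The hard part will be making the fixed-point analysis rigorous in positive characteristic: one must justify the holomorphic Lefschetz formula and the linearization of a tame automorphism in char $p$, and, more seriously, one must establish the full list of admissible local types for each order $n\le 8$ (the bound itself coming from \cite[Theorem 3.3]{DK09}). I expect two viable routes. The intrinsic one follows Dolgachev--Keum: study the $G$-action on the semisimple $\bbq_{l}$-representation $H^{2}(X,\bbq_{l})$ and its coinvariant lattice directly, with tameness guaranteeing the same cyclotomic characteristic polynomials as over $\bbc$. The alternative is to lift: since the action is tame, deform $(X,G)$ to characteristic $0$ and use smooth proper base change to identify the $G$-representations on $l$-adic cohomology with those of a complex K3, reducing to the theorem of Mukai and Nikulin. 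Either way, once the fixed-point numbers are pinned down uniformly, the reduction in the first paragraph closes the argument.
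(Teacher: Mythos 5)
First, note that the paper does not prove this proposition at all: it is imported verbatim as \cite[Proposition 4.1]{DK09}, so there is no internal proof to compare against, and what you have written is an attempt to reprove the cited result. Your first paragraph is sound: since $b_1=b_3=0$ for a K3 surface, the Mathieu character $\sum_i\Tr(g,H^i)$ coincides with the Lefschetz number; tameness makes $dg_x$ semisimple, the symplectic condition forces its eigenvalues to be $\zeta,\zeta^{-1}$ with $\det dg_x=1$, and an eigenvalue $1$ would force $dg_x=\mathrm{id}$ and hence $g=\mathrm{id}$, so for $g\neq\mathrm{id}$ the fixed locus is finite and reduced and $\Tr(g,H^{*})=|X^{g}|$. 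The genuine gap is in the evaluation $|X^{g}|=\epsilon(\ord(g))$. The Woods Hole/holomorphic Lefschetz formula in characteristic $p$ is an identity in the ground field: $H^{i}(X,\mathcal{O}_X)$ is an $\overline{\bbf}_p$-vector space, so both the global side ``$=2$'' and the local terms $1/((1-\zeta_x)(1-\zeta_x^{-1}))$ live in characteristic $p$, and your linear system determines the type multiplicities only modulo $p$. Together with non-negativity and the a priori bound $|X^{g}|\leq 24$ this does not pin down the answer when $p$ is small (e.g.\ $p=5$, $\ord(g)=8$), so the displayed computations for orders $2$ and $5$, which implicitly take place over $\bbc$, do not transfer. (Your order-$5$ example also assumes the two conjugate local types occur with equal multiplicity; over $\bbc$ that is an output of the linear system, not an input.)

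The two escape routes you name at the end are precisely the hard content of \cite{DK09} and cannot be left as remarks: lifting a K3 surface together with a tame symplectic group action to characteristic zero is itself a substantial theorem and is not a routine application of smooth proper base change, while the intrinsic route requires a genuinely positive-characteristic argument (Dolgachev--Keum work with the quotient surface, which is a K3 with rational double points whose minimal resolution again has Euler number $24$, combined with the integral $l$-adic Lefschetz data for all powers of $g$ --- not the mod-$p$ holomorphic Lefschetz identity). As written, your proposal is an accurate outline of Nikulin's characteristic-zero proof with the one step that actually needs positive-characteristic input left open; for the purposes of this paper the correct move is the one the author makes, namely to quote \cite[Proposition 4.1]{DK09}.
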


\begin{proof}[Proof of theorem \ref{thm2}]
By Remark \ref{rmk}, we deduce that 
\[
\sum_{n=0}^{\infty}\text{Tr}(g,[e({S}^{[n]})])t^{n}=\text{exp}\left(\sum_{m=1}^{\infty}\sum_{k=1}^{\infty}\frac{\text{Tr}(g^{k},[e({S})])t^{mk}}{k}\right).
\]
Then by Proposition \ref{symplectic fact}, we obtain the equality we want. 

When $G$ is a cyclic group of order $N$, we know that $N\leq 8$ by \cite[Theorem 3.3]{DK09}. Then the proof is the same as the proof in \cite{Zha21} in the characteristic zero case.
\end{proof}

\begin{proof}[Proof of theorem \ref{thm3}]
If $g$ is a symplectic automorphism (fixing the origin) on a complex abelian surface, then $g$ has order $1,2,3,4$ or $6$ by \cite[Lemma 3.3]{Fuj88}. We will do the case when the order $N=4$, and the calculation for other cases are similar. By \cite[Page 33]{Fuj88}, we know the explicit action of $g$ on the torus $S=\bbc^2/\bigwedge$ in each case. If $N=4$, then the action on $H^1(S,\bbc)$ is given by
\[
\begin{pmatrix}
0 & -1 & 0 & 0\\
1 & 0  & 0 & 0\\
0 & 0 & 0 & -1\\
0 & 0 & 1 & 0
\end{pmatrix}.
\]  
Hence we deduce that $\Tr(g, [e(S)])=\Tr(g^3,[e(S)])=4$, and $\Tr(g^2,[e(S)])=16$. Now
\[
\begin{aligned}
\sum_{n=0}^{\infty}\text{Tr}(g,[e({S}^{[n]})])t^{n}&=\exp\left(\sum_{m=1}^{\infty}\sum_{k=1}^{\infty}\frac{\text{Tr}(g^{k},[e({S})])t^{mk}}{k}\right)\\
&=\exp\left(\sum_{m=1}^{\infty}\sum_{k\equiv 1,3}\frac{4t^{mk}}{k}+\sum_{m=1}^{\infty}\sum_{k\equiv 2}\frac{16t^{mk}}{k}\right)\\
&=\exp\left(\sum_{m=1}^{\infty}\left(\sum_{k=1}^{\infty}\frac{4t^{mk}}{k}-\sum_{k=1}^{\infty}\frac{4t^{2mk}}{2k}\right)+\sum_{m=1}^{\infty}\left(\sum_{k=1}^{\infty}\frac{16t^{2mk}}{2k}-\sum_{k=1}^{\infty}\frac{16t^{4mk}}{4k}\right)\right)\\
&=\frac{\prod_{m=1}^{\infty}(1-t^m)^{-4}}{\prod_{m=1}^{\infty}(1-t^{2m})^{-2}}\frac{\prod_{m=1}^{\infty}(1-t^{2m})^{-8}}{\prod_{m=1}^{\infty}(1-t^{4m})^{-4}}\\
&=\frac{\eta^{4}(t^4)}{\eta^{4}(t)\eta^{6}(t^2)}.
\end{aligned}
\]
\end{proof}

\begin{remark}
Fix a smooth projective surface $S$ and an automorphism $g$ of finite order. From the proof of Theorem \ref{thm2} or Theorem \ref{thm3}, we notice that if  $\Tr(g^{k},[e({S})])$ only depends on the order of $g^k$ in the cyclic group $\langle g\rangle$ for $k\geq 0$, then the generating function $\sum_{n=0}^{\infty}\text{Tr}(g,[e({S}^{[n]})])t^{n}$ is an eta quotient by the inclusion-exclusion principle.
\end{remark}

\section{Generalized Kummer varieties}

Let $A$ be an abelian surface over $\bbc$. Let $\omega_n:A^{[n]}\to A^{(n)}$ be the Hilbert-Chow morphism and let $g_n: A^{(n)}\to A$ be the addition map. The generalized Kummer variety of $A$ is defined to be 
\[
K_{n}(A):=\omega_{n}^{-1}(g_{n}^{-1}(0)).
\]
This is a smooth projective holomorphic symplectic variety. We follow the strategy in \cite{Go94}.

Now suppose $A$ is an abelian surface with a $G$-action over $\bbf_q$. Define the map $\gamma_n$ by
\[
\gamma_n: \coprod_{\alpha\in P(n)}\left(\left(\prod_{i=1}^{\infty}S^{(\alpha_i)}(\overline{\bbf}_q)^{gF_q}\right)\times\bba^{n-|\alpha|}({\bbf}_q) \right)\to S^{(n)}(\overline{\bbf}_q)^{gF_q},
\]
\[
((\zeta_i)_i,v)\mapsto\sum i\cdot\zeta_i.
\]

\begin{lemma}\label{preimage}
For any $\zeta\in S^{(n)}(\overline{\bbf}_q)^{gF_q}$, we have $|\gamma_{n}^{-1}(\zeta)=|\omega_{n}^{-1}(\zeta)|$.
\end{lemma}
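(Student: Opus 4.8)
The plan is to evaluate both cardinalities explicitly by breaking $\zeta$ into the $gF_q$-orbits of its support and showing that the two counts agree orbit by orbit. Here both sides count $gF_q$-fixed points: $|\gamma_n^{-1}(\zeta)|$ is the number of preimages in the (already $gF_q$-fixed) domain, and $|\omega_n^{-1}(\zeta)|$ means $|\omega_n^{-1}(\zeta)(\overline{\bbf}_q)^{gF_q}|$. First I would write $\zeta=\sum_{O}m_{O}[O]$, where $O$ ranges over the $gF_q$-orbits on $\mathrm{Supp}(\zeta)\subset S(\overline{\bbf}_q)$, $d_O=|O|$ is the orbit length, and $m_O$ is the common multiplicity along $O$ (common because $gF_q$ fixes $\zeta$ and hence permutes its support preserving multiplicities). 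This decomposition is the combinatorial backbone of both computations.

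For the right-hand side, the Hilbert--Chow fibre factors as $\omega_n^{-1}(\zeta)\cong\prod_{p\in\mathrm{Supp}(\zeta)}{\rm Hilb}^{m_p}(\widehat{\mathcal{O}_{S_{\overline{\bbf}_q},p}})$, and $gF_q$ acts by cyclically shifting the factors within each orbit. A point of the product over a single orbit $O$ is $gF_q$-fixed iff its entry at a chosen base point $p\in O$ is fixed by $(gF_q)^{d_O}$ and the other entries are its iterated translates; hence the fixed-point count over $O$ is $|{\rm Hilb}^{m_O}(\widehat{\mathcal{O}_{S_{\overline{\bbf}_q},p}})(\overline{\bbf}_q)^{(gF_q)^{d_O}}|$. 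Since $(gF_q)^{d_O}=g^{d_O}F_{q^{d_O}}$ and $p$ is fixed by it, applying Lemma \ref{fixpoint} over the field $\bbf_{q^{d_O}}$ (with $g^{d_O}$ in place of $g$) shows this equals $|V_{m_O}(\overline{\bbf}_q)^{F_q^{d_O}}|$, so $|\omega_n^{-1}(\zeta)|=\prod_O |V_{m_O}(\overline{\bbf}_q)^{F_q^{d_O}}|$.

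For the left-hand side, I would unwind $\gamma_n^{-1}(\zeta)$: a preimage in the $\alpha$-component is a tuple $(\zeta_i)_i$ with $\zeta_i\in S^{(\alpha_i)}(\overline{\bbf}_q)^{gF_q}$ and $\sum_i i\,\zeta_i=\zeta$, together with a point of $\bba^{n-|\alpha|}(\bbf_q)$. Because the $\zeta_i$ are $gF_q$-fixed effective cycles supported on $\mathrm{Supp}(\zeta)$, each is recorded by orbit-constant multiplicities $c_{i,O}\ge 0$ subject to $\sum_i i\,c_{i,O}=m_O$; the partition type satisfies $\alpha_i=\sum_O d_O\,c_{i,O}$, from which one computes $n-|\alpha|=\sum_O d_O\sum_i(i-1)c_{i,O}$. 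Summing the weight $q^{\,n-|\alpha|}$ over all such decompositions then factors over orbits, and reindexing the orbit-local sum by partitions $\lambda$ of $m_O$ (a choice of $(c_i)_i$ is a partition, with number of parts $\ell(\lambda)$ satisfying $\sum_i(i-1)c_i=m_O-\ell(\lambda)$) gives $|\gamma_n^{-1}(\zeta)|=\prod_O\big(\sum_{\lambda\vdash m_O}(q^{d_O})^{\,m_O-\ell(\lambda)}\big)$.

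It then remains to match the two products factor by factor. By the cell decomposition of Proposition \ref{cell} the Frobenius count is $|V_{m}(\overline{\bbf}_q)^{F_q^{d}}|=\sum_{d'}c_{m,d'}(q^{d})^{d'}$, where $c_{m,d'}$ is the number of $d'$-cells of $V_m$, and the generating-function identity used in the proof of Theorem \ref{thm1} gives $c_{m,d'}=p(m,m-d')$, the number of partitions of $m$ into $m-d'$ parts. Hence with $x=q^{d}$ one has $\sum_{d'}c_{m,d'}x^{d'}=\sum_{\lambda\vdash m}x^{m-\ell(\lambda)}$, which is exactly the orbit-local factor found on the left, so the two products coincide. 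I expect the main obstacle to be the equivariant bookkeeping on the left-hand side: checking that $gF_q$-invariance forces orbit-constant multiplicities, that the exponent supplied by the affine factor $\bba^{n-|\alpha|}$ is precisely $\sum_O d_O\sum_i(i-1)c_{i,O}$, and that the weighted sum factors cleanly over orbits; the matching identity $c_{m,d'}=p(m,m-d')$ is the other delicate point, but it is already recorded in the proof of Theorem \ref{thm1}.
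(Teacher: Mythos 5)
Your proposal is correct and follows essentially the same route as the paper: decompose $\zeta$ into its $gF_q$-orbits (the paper's ``primitive cycles'' of degrees $d_i$), reduce the fibre count of $\omega_n$ over each orbit to $|V_{m}(\overline{\bbf}_q)^{F_q^{d}}|$ via Lemma \ref{fixpoint}, and match this against the partition-indexed parametrization of $\gamma_n^{-1}(\zeta)$ weighted by $q^{\,n-|\alpha|}$. You spell out some bookkeeping the paper leaves implicit (the cyclic-shift fixed-point argument on the product fibre and the orbit-constant multiplicities), but the decomposition, the key lemma, and the partition identity are the same.
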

\begin{proof}
Let $\zeta=\sum_{i=1}^{r}n_{i}\zeta_i\in S^{(n)}(\overline{\bbf}_q)^{gF_q}$, where $\zeta_i$ are distinct primitive cycles of degree $d_i$. Then
\[
\begin{aligned}
|\omega_{n}^{-1}(\zeta)|&=\prod_{i=1}^{r}|V_{n_i}(\overline{\bbf}_q)^{(gF_q)^{d_i}}|\\
&=\prod_{i=1}^{r}|V_{n_i}(\overline{\bbf}_q)^{(F_q)^{d_i}}|\\
&=\prod_{i=1}^{r}\sum_{\beta^{i}_j\in P(n_i)}q^{d_i(n_i-|\beta^{i}_j|)},
\end{aligned}
\]
where $V_n={\rm Hilb}^{n}(\mathbb{F}_{q}[[s,t]])$. Here we use the key Lemma \ref{fixpoint}.

For $i=1,...,r$, let $\beta^{i}=(1^{\beta^{i}_1},2^{\beta^{i}_2},...)$ be a partition of $n_i$, and let $\alpha=(1^{\alpha_1},2^{\alpha_2},...)$ be the union of $d_i$ copies of each $\beta^i$, where $\alpha_j=\sum_{i}d_{i}\beta^{i}_{j}$. Let
\[
\eta_j=\sum_{i=1}^{r}\beta^{i}_{j}\zeta_i.
\]
Let $\eta$ be the sequence $(\eta_1,\eta_2,\eta_3,...)$. Then for all $w\in \bba^{n-|\alpha|}$ we have
\[
\gamma_{n}((\eta,w))=\zeta,
\]
and in this way we get all the elements of $\gamma_{n}^{-1}(\zeta)$. Hence
\[
|\gamma_{n}^{-1}(\zeta)|=\sum_{\beta^{1}\in P(n_1)}\sum_{\beta^{2}\in P(n_2)}\cdots\sum_{\beta^{r}\in P(n_r)}q^{n-\sum d_{i}|\beta^i|}=|\omega_{n}^{-1}(\zeta)|.
\]
\end{proof}

\begin{lemma}\label{lemma1}
Denote by $h_n: A^{(n)}(\overline{\bbf}_q)^{gF_q}\to A(\overline{\bbf}_q)^{gF_q}$ the restriction of $g_n$. Then $h_n$ is onto and $|h_{n}^{-1}(x)|$ is independent of $x\in A(\overline{\bbf}_q)^{gF_q}$.
\end{lemma}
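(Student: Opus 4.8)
The plan is to reduce the statement to the vanishing of character sums on $\Gamma:=A(\overline{\bbf}_q)^{gF_q}$ and then to exploit the multiplicative (Euler-product) structure of the set of $gF_q$-invariant $0$-cycles. Throughout I would write $\phi=gF_q$, viewed as an endomorphism of $A$; since the $G$-action is defined over $\bbf_q$ and $g$ fixes the origin, both $g$ and $F_q$ are group homomorphisms, hence so is $\phi$, and $\Gamma=\ker(\phi-1)$ is a finite abelian group into which $h_n$ lands. Surjectivity of $h_n$ for $n\ge 1$ is immediate, since the $0$-cycle $(n-1)[0]+[x]$ is $\phi$-invariant and maps to $x$.

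First I would record the only elementary input: translation by $a\in\Gamma$ preserves $A^{(n)}(\overline{\bbf}_q)^{gF_q}$ and sends $h_n^{-1}(x)$ bijectively to $h_n^{-1}(x+na)$, so $|h_n^{-1}(\cdot)|$ is already constant on each coset of $n\Gamma$ in $\Gamma$. The genuine difficulty is to compare fibers over different cosets of $n\Gamma$, where no $\phi$-equivariant translation is available; this is the step I expect to be the main obstacle, and everything below is designed to overcome it.

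To handle it, I would encode all fibers at once in the group ring $\bbz[\Gamma]$. Every $\phi$-invariant effective $0$-cycle is uniquely a multiset of primitive cycles (the $\phi$-orbits), and the sum in $\Gamma$ of the primitive cycle generated by a point $P$ of exact period $d$ is $N_d(P)$, where $N_d:=1+\phi+\dots+\phi^{d-1}$ satisfies $(\phi-1)N_d=\phi^d-1$. Writing $[y]$ for the basis element of $y\in\Gamma$, the generating series
\[
Z(t)=\sum_{n\ge 0}\Big(\sum_{x\in\Gamma}|h_n^{-1}(x)|\,[x]\Big)t^n=\prod_{C\ \text{primitive}}\big(1-t^{\deg C}[s(C)]\big)^{-1}
\]
records every fiber cardinality as a coefficient. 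For a character $\chi$ of $\Gamma$, extended to a ring homomorphism $\bbz[\Gamma]\to\bbc$, the desired conclusion is, by Fourier inversion on $\Gamma$, equivalent to $\chi(Z(t))=1$ for every nontrivial $\chi$.

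Finally I would compute $\log\chi(Z(t))$. Grouping the primitive cycles by degree, reorganizing by total degree $n=kd$, and using $N_n(P)=(n/d)N_d(P)$ for $P$ of period $d$, the coefficient collapses to
\[
[t^n]\log\chi(Z(t))=\frac1n\sum_{P\in\ker(\phi^n-1)}\chi\big(N_n(P)\big).
\]
The key point, and where the Weil bound enters, is that $N_n$ is an isogeny: the eigenvalues $\lambda$ of $\phi$ on $H^1(A,\bbq_l)$ satisfy $|\lambda|=\sqrt q$, so $1+\lambda+\dots+\lambda^{n-1}=(\lambda^n-1)/(\lambda-1)\ne 0$ and $N_n$ acts invertibly on the Tate module. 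Consequently $N_n$ maps $\ker(\phi^n-1)$ onto $\Gamma$ (if $N_n(P)=x\in\Gamma$ then $(\phi^n-1)P=(\phi-1)x=0$), so $\chi\circ N_n$ is a nontrivial character of the finite group $\ker(\phi^n-1)$ whenever $\chi\ne 1$, and its sum vanishes for every $n\ge 1$. This forces $\log\chi(Z(t))=0$, hence $\chi(Z(t))=1$, and the lemma follows. The hard part is thus isolated in this last computation: converting the partition/inclusion–exclusion over cycle types into the clean norm-map sum, and recognizing surjectivity of $N_n$ from the Weil estimate is exactly what makes the cross-coset comparison work.
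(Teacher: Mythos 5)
Your argument is correct, but it is a genuinely different (and much longer) route than the paper's. The paper disposes of the lemma in one line: by the remark at the start of Section 2 (Lang's theorem, as in \cite[Prop.~3.3]{DL76}), $gF_q$ is the geometric Frobenius of a twisted form of $A$ over $\bbf_q$, and since $g$ is a group automorphism this twist is again an abelian surface to which \cite[Lemma 2.4.8]{Go94} applies verbatim; so one simply replaces $gF_q$ by $F_q$. You instead prove the statement from scratch: after the (correct) observation that translation by $\Gamma=A(\overline{\bbf}_q)^{gF_q}$ only settles constancy on cosets of $n\Gamma$, you package all fibers into $Z(t)\in\bbz[\Gamma][[t]]$, reduce to the vanishing of $\chi(Z(t))-1$ for nontrivial characters $\chi$, and collapse the logarithm of the Euler product to the sums $\frac1n\sum_{P\in\ker(\phi^n-1)}\chi(N_n(P))$, which vanish because $N_n=1+\phi+\dots+\phi^{n-1}$ is an isogeny (eigenvalues $(\lambda^n-1)/(\lambda-1)\neq0$ by the Weil bounds for $\phi=gF_q$, which themselves follow from $\phi^{N}=F_q^{N}$ or from the twisting observation) and hence maps $\ker(\phi^n-1)$ onto $\Gamma$, making $\chi\circ N_n$ a nontrivial character. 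All steps check out, including $N_n(P)=(n/d)N_d(P)$ for $P$ of exact period $d\mid n$ and the identification of $[t^n]Z(t)$ with $\sum_x|h_n^{-1}(x)|[x]$. What your approach buys is self-containedness: it makes explicit the mechanism (surjectivity of the norm isogeny, i.e.\ essentially Lang's theorem) that is hidden inside the citation of G\"ottsche, and it handles the twisted Frobenius directly rather than by transport of structure. What it costs is length, plus the same standing hypothesis the paper also needs but does not state in the lemma: that $g$ fixes the origin, so that $\phi$ is a group endomorphism, $g_n$ is equivariant, and the twist is an abelian variety.
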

\begin{proof}
Since $gF_q$ is the Frobenius map of some twist of $A$, we can replace $gF_q$ by $F_q$ in the statement, and this is true by \cite[Lemma 2.4.8]{Go94}.
\end{proof}

For each $l\in\mathbb{N}$, let $A(\overline{\bbf}_q)^{gF_q}_{l}$ be the image of the multiplication $(l):A(\overline{\bbf}_q)^{gF_q}\to A(\overline{\bbf}_q)^{gF_q}$.

\begin{lemma}\label{lemma2}
Let $\mu=(n_1,...,n_t)$ be a partition of a number $n\in\mathbb{N}$. Then
\[
\sigma_{\mu}\colon (A(\overline{\bbf}_q)^{gF_q})^{t}\to A(\overline{\bbf}_q)^{gF_q}_{gcd(\mu)}
\]
\[
(x_1,...,x_t)\mapsto\sum_{i=1}^{t}n_{i}x_{i}
\]
is onto and $|\sigma^{-1}_{\mu}(x)|$ is independent of $x\in A(\overline{\bbf}_q)^{gF_q}_{gcd(\mu)}$.
\end{lemma}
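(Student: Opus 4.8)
The plan is to treat the statement as one about the finite abelian group $\Gamma:=A(\overline{\bbf}_q)^{gF_q}$. First I would check that $\Gamma$ really is a finite abelian group: since $g$ fixes the origin and $F_q$ is the geometric Frobenius, both are group endomorphisms of the abelian variety $A$, hence so is $gF_q$, and therefore $\Gamma=\ker(\mathrm{id}-gF_q)$ is a finite subgroup of $A(\overline{\bbf}_q)$. Equivalently, as in the proof of Lemma \ref{lemma1}, $gF_q$ is the Frobenius of a twist $A'$ of $A$ and $\Gamma=A'(\bbf_q)$. Granting this, the map $\sigma_\mu\colon\Gamma^t\to\Gamma$, $(x_1,\dots,x_t)\mapsto\sum_{i=1}^t n_i x_i$, is visibly a homomorphism of finite abelian groups, so both assertions become purely group-theoretic.

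Next I would identify the image, which is the subgroup $\sum_{i=1}^t n_i\Gamma\subseteq\Gamma$. Put $d=gcd(\mu)=gcd(n_1,\dots,n_t)$. Since $d\mid n_i$ for every $i$, each $n_i\Gamma\subseteq d\Gamma$, giving $\sum_i n_i\Gamma\subseteq d\Gamma$. Conversely, B\'ezout furnishes integers $a_1,\dots,a_t$ with $\sum_i a_i n_i=d$, so for any $x\in\Gamma$ we have $dx=\sum_i n_i(a_i x)\in\sum_i n_i\Gamma$, whence $d\Gamma\subseteq\sum_i n_i\Gamma$. Thus the image is exactly $d\Gamma=A(\overline{\bbf}_q)^{gF_q}_{gcd(\mu)}$, which is precisely the claim that $\sigma_\mu$ is onto its stated target.

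Finally, the equidistribution of fibers is formal: $\sigma_\mu$ is a surjective homomorphism of finite groups onto $A(\overline{\bbf}_q)^{gF_q}_{gcd(\mu)}$, so every fiber $\sigma_\mu^{-1}(x)$ is a coset of $\ker\sigma_\mu$ and hence has the single cardinality $|\ker\sigma_\mu|=|\Gamma|^t/|A(\overline{\bbf}_q)^{gF_q}_{gcd(\mu)}|$, independent of $x$. This mirrors Lemma \ref{lemma1} and the corresponding step in \cite{Go94}; the only new ingredient is that the multiplicities $n_i$ replace the full group by the gcd-multiple $d\Gamma$. There is no serious obstacle here: the one point requiring any care is the very first step—knowing that $\Gamma$ carries the abelian group structure—and that is already secured by the reduction to the Frobenius of a twist used in Lemma \ref{lemma1}; everything after that is B\'ezout together with the coset counting for a group homomorphism.
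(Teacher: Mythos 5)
Your proof is correct and takes essentially the same route as the paper: the paper's proof likewise reduces to the fact that $gF_q$ is the Frobenius of a twist of $A$, so that $A(\overline{\bbf}_q)^{gF_q}$ is a finite abelian group, and then cites \cite[Lemma 2.4.9]{Go94} for precisely the B\'ezout-plus-coset-counting argument you spell out. The only difference is that you make the group-theoretic content of that citation explicit, which is a harmless expansion rather than a different approach.
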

\begin{proof}
As the above lemma, we can replace $gF_q$ by $F_q$, and this is true by \cite[Lemma 2.4.9]{Go94}.
\end{proof}

We denote $(\left(\prod_{i=1}^{\infty}S^{(\alpha_i)}(\overline{\bbf}_q)^{gF_q}\right)\times\bba^{n-|\alpha|}({\bbf}_q)$ by $A[\alpha]$. Denote the restriction map of $\gamma_n$ on $A[\alpha]$ by $\gamma_{n,\alpha}:A[\alpha]\to S^{(n)}(\overline{\bbf}_q)^{gF_q}$.

\begin{lemma}\label{kummer}
\[
|K_{n}(A)(\overline{\bbf}_q)^{gF_q}|=\frac{1}{|A(\overline{\bbf}_q)^{gF_q}|}\sum_{\alpha\in P(n)}\left(gcd(\alpha)^{4}q^{n-|\alpha|}\prod_{i=1}^{\infty}|A^{(\alpha_i)}(\overline{\bbf}_q)^{gF_q}|\right).
\]

\end{lemma}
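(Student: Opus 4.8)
The plan is to stratify the $gF_q$-fixed locus of $K_n(A)$ according to the cycle type of the underlying $0$-cycle, mirroring Göttsche's count in \cite{Go94}, and to use Lemmas \ref{preimage}, \ref{lemma1} and \ref{lemma2} as the three main inputs. Write $B=A(\overline{\bbf}_q)^{gF_q}$, and for $l\in\mathbb{N}$ let $B_l=A(\overline{\bbf}_q)^{gF_q}_l$ be the image of multiplication by $l$. Since $\omega_n$ and $g_n$ are defined over $\bbf_q$ and commute with the $G$-action, $gF_q$ preserves $K_n(A)=\omega_{n}^{-1}(g_{n}^{-1}(0))$, and because $\omega_n$ is $gF_q$-equivariant its fixed locus breaks up as a disjoint union over the fixed cycles $\zeta\in A^{(n)}(\overline{\bbf}_q)^{gF_q}$ with $g_n(\zeta)=0$:
\[
|K_{n}(A)(\overline{\bbf}_q)^{gF_q}|=\sum_{\zeta:\,g_n(\zeta)=0}|(\omega_{n}^{-1}(\zeta))^{gF_q}|.
\]

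First I would replace each fibre count $|(\omega_{n}^{-1}(\zeta))^{gF_q}|$ by $|\gamma_{n}^{-1}(\zeta)|$ using Lemma \ref{preimage}. Because the fibres $\gamma_n^{-1}(\zeta)$ partition $\coprod_{\alpha\in P(n)}A[\alpha]$ and $\gamma_n$ is surjective, the displayed sum becomes the number of points of $\coprod_\alpha A[\alpha]$ killed by $g_n\circ\gamma_n$. A direct check shows $g_n(\gamma_n((\zeta_i)_i,v))=\sum_i i\cdot g_{\alpha_i}(\zeta_i)$, the sum being taken in the group $A$, while the affine coordinate $v\in\bba^{n-|\alpha|}(\bbf_q)$ is unconstrained and contributes a factor $q^{n-|\alpha|}$. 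This yields
\[
|K_{n}(A)(\overline{\bbf}_q)^{gF_q}|=\sum_{\alpha\in P(n)}q^{n-|\alpha|}\,N(\alpha),\qquad N(\alpha)=\Big|\Big\{(\zeta_i)_i\in\textstyle\prod_i A^{(\alpha_i)}(\overline{\bbf}_q)^{gF_q}:\sum_i i\,g_{\alpha_i}(\zeta_i)=0\Big\}\Big|.
\]

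The core of the argument is the evaluation of $N(\alpha)$. Writing $t$ for the number of distinct parts of $\alpha$ (the factors with $\alpha_i=0$ being $A^{(0)}=\mathrm{pt}$ and hence trivial), I would factor the map $(\zeta_i)_i\mapsto\sum_i i\,g_{\alpha_i}(\zeta_i)$ as $\prod_i A^{(\alpha_i)}(\overline{\bbf}_q)^{gF_q}\xrightarrow{\prod_i h_{\alpha_i}}B^{t}\xrightarrow{\sigma_\mu}B_{gcd(\alpha)}$, where $\mu$ is the tuple of distinct part sizes, so that $gcd(\mu)=gcd(\alpha)$. By Lemma \ref{lemma1} each $h_{\alpha_i}$ is onto with constant fibre $|A^{(\alpha_i)}(\overline{\bbf}_q)^{gF_q}|/|B|$, and by Lemma \ref{lemma2} the map $\sigma_\mu$ is onto $B_{gcd(\alpha)}$ with constant fibre $|B|^{t}/|B_{gcd(\alpha)}|$. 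Composing the two fibre counts, the powers $|B|^{t}$ cancel and
\[
N(\alpha)=\frac{\prod_i|A^{(\alpha_i)}(\overline{\bbf}_q)^{gF_q}|}{|B_{gcd(\alpha)}|}.
\]

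It then remains to identify $|B_{gcd(\alpha)}|=|B|/gcd(\alpha)^{4}$; substituting this into the last display and then into the sum produces the asserted formula. I expect this identity to be the main obstacle, since it is the only place where the geometry of the abelian surface, rather than pure bookkeeping, enters: writing $l=gcd(\alpha)$, the cokernel $B/lB$ has the same cardinality as the kernel $B[l]=A[l](\overline{\bbf}_q)^{gF_q}$ of multiplication by $l$ on $B$, so one needs $|A[l](\overline{\bbf}_q)^{gF_q}|=l^{4}$, i.e.\ that the full $l$-torsion $(\bbz/l)^{4}$ is $gF_q$-fixed. This is precisely the input recorded in \cite[Lemma 2.4.9]{Go94}, and in the application it is guaranteed after passing to a sufficiently large power of $q$ (harmless, since the Hodge-theoretic conclusion of Theorem \ref{thm4} is extracted from the point counts over all finite extensions via Proposition \ref{symmetric} and Theorem \ref{padichodge}). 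Once this torsion count is available the remaining steps are purely formal.
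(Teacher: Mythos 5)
Your argument is essentially the paper's own proof: it likewise stratifies over partitions via Lemma \ref{preimage}, factors the constraint $g_n(\zeta)=0$ through the commutative square relating $\gamma_{n,\alpha}$, $h_n$, a map $f_\alpha$ to $(A(\overline{\bbf}_q)^{gF_q})^t$, and $\sigma_\mu$, and invokes Lemmas \ref{lemma1} and \ref{lemma2} to conclude that the fibres are constant of size $|A[\alpha]|/|A(\overline{\bbf}_q)^{gF_q}_{gcd(\alpha)}|$. The identification $|A(\overline{\bbf}_q)^{gF_q}_{gcd(\alpha)}|=|A(\overline{\bbf}_q)^{gF_q}|/gcd(\alpha)^4$ that you correctly single out as the one genuinely geometric input is exactly the step the paper dispatches in one line by citing that multiplication by $gcd(\alpha)$ is \'etale of degree $gcd(\alpha)^4$ (via \cite[Lemma 2.4.9]{Go94}).
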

\begin{proof}
By Lemma \ref{preimage}, we have
\[
|K_{n}(A)(\overline{\bbf}_q)^{gF_q}|=|\gamma_{n}^{-1}(h_{n}^{-1})|=\sum_{\alpha\in P(n)}|\gamma_{n,\alpha}^{-1}(h_{n}^{-1}(0))|.
\]
Suppose $\alpha=(1^{\alpha_1},2^{\alpha_2},...)$. Let
\[
\mu=(m_1,...,m_t)\colon =(1^{\mu_1},2^{\mu_2},...),
\]
where $\mu_i={\rm min}(1,\alpha_i)$ for all $i$. Let
\[
f_{\alpha}: S[\alpha]\to (A(\overline{\bbf}_q)^{gF_q})^{t}
\]
\[
((\zeta_1,...,\zeta_t),w)\mapsto (g_{\alpha_{m_1}}(\zeta_1),...,g_{\alpha_{m_t}}(\zeta_t)).
\]
Then the following diagram commutes:
\[
\begin{CD}
A[\alpha] @>\gamma_{n,\alpha}>> A^{(n)}(\overline{\bbf}_q)^{gF_q}\\
@V f_{\alpha} VV @VV h_n V\\
(A(\overline{\bbf}_q)^{gF_q})^t @>\sigma_{\mu}>> A(\overline{\bbf}_q)^{gF_q}.
\end{CD}
\]
By Lemma \ref{lemma1} and Lemma \ref{lemma2}, $\sigma_{\mu}\circ f_{\alpha}$ maps $S[\alpha]$ onto $A(\overline{\bbf}_q)^{gF_q}_{gcd(\alpha)}=A(\overline{\bbf}_q)^{gF_q}_{gcd(\mu)}$, and $|f_{\alpha}^{-1}(\sigma_{\mu}^{-1}(x))|$ is independent of $x\in A(\overline{\bbf}_q)^{gF_q}_{gcd(\alpha)}$. Since the multiplication with $gcd(\alpha)$ is an \'etale morphism of degree $(gcd(\alpha))^4$, we have
\[
\begin{aligned}
|K_{n}(A)(\overline{\bbf}_q)^{gF_q}|&=\sum_{\alpha\in P(n)}|f_{\alpha}^{-1}(\sigma_{\mu}^{-1}(x))|\\
&=\sum_{\alpha\in P(n)}\frac{|A[\alpha]|}{|A(\overline{\bbf}_q)^{gF_q}_{gcd(\alpha)}|}\\
&=\frac{1}{|A(\overline{\bbf}_q)^{gF_q}|}\sum_{\alpha\in P(n)}\left(gcd(\alpha)^{4}q^{n-|\alpha|}\prod_{i=1}^{\infty}|A^{(\alpha_i)}(\overline{\bbf}_q)^{gF_q}|\right).
\end{aligned}
\]
\end{proof}

\begin{proof}[Proof of theorem \ref{thm4}]
By Lemma \ref{kummer}, we have
\[
\begin{aligned}
\sum_{n=0}^{\infty}|K_{n}(A)(\overline{\bbf}_q)^{gF_q}|t^{n}&=\sum_{n=0}^{\infty}\frac{1}{|A(\overline{\bbf}_q)^{gF_q}|}\sum_{\alpha\in P(n)}\left(gcd(\alpha)^{4}q^{n-|\alpha|}\prod_{i=1}^{\infty}|A^{(\alpha_i)}(\overline{\bbf}_q)^{gF_q}|\right)t^n\\
&=\frac{(w\frac{d}{dw})^4}{|A(\overline{\bbf}_q)^{gF_q}|}\sum_{n=0}^{\infty}\sum_{\alpha\in P(n)}w^{gcd(\alpha)}\prod_{i=1}^{\infty}\Big(|A^{(\alpha_i)}(\overline{\bbf}_q)^{gF_q}| q^{(i-1)\alpha_{i}}t^{i\alpha_i}\Big)\\
&=\frac{(w\frac{d}{dw})^4}{|A(\overline{\bbf}_q)^{gF_q}|}\bigodot_{m=1}^{\infty}\left(1+w^{m}(-1+\sum_{n=0}^{\infty}|A^{(n)}(\overline{\bbf}_q)^{gF_q}|q^{(m-1)n}t^{mn})\right).
\end{aligned}
\]
Then by the proof of Proposition \ref{symmetric} and Theorem \ref{padichodge}, the theorem follows.
\end{proof}

\begin{remark}
It is calculated in \cite[Corollary 2.4.13]{Go94} that $\sum_{n=1}^{\infty}e(K_{n}(A))q^{n}=\frac{(q\frac{d}{dq})^{3}}{24}E_2$, where $E_2:=1-24\sum_{n=1}^{\infty}\sigma_{1}(n)q^{n}$ is a quasi-modular form. As in the case of Hilbert schemes of points, we can calculate $\sum_{n=0}^{\infty}{\Tr(g,[e(K_{n}(A))])t^n}$, where $g$ is a symplectic automorphism of finite order on the abelian surface $A$. But it is not obvious to the author whether or not the sum can be expressed by quasi-modular forms.
\end{remark}

\end{document}